\def\softd{{\leavevmode\setbox1=\hbox{d}%
          \hbox to 1.05\wd1{d\kern-0.4ex{\char039}\hss}}}
\newcommand{\PO}[1]{{\color{black}{#1}}}
\newcommand{\dx}{\Delta x}
\newcommand{\dt}{\Delta t}
\newcommand{\bbu}{\mathbf{u}}
\newcommand{\bbU}{\mathbf{U}}
\newcommand{\bbF}{\mathbf{F}}
\newcommand{\bbf}{\mathbf{f}}
\newcommand{\uh}{u_{\mathrm{h}}}
\newcommand{\R}{\mathbb{R}}
\newcommand{\jph}{{j+\frac{1}{2}}}
\newcommand{\jmh}{{j-\frac{1}{2}}}
\newcommand*\xbar[1]{%
  \hbox{%
    \vbox{%
      \hrule height 0.5pt 
      \kern0.4ex
      \hbox{%
        \kern-0.05em
        \ensuremath{#1}%
        \kern-0.00em
      }%
    }%
  }%
}
\newtheorem{thm}{Theorem}[section]
\newtheorem{lemma}[thm]{Lemma}
\newtheorem{proposition}[thm]{Proposition}
\newtheorem{rmk}[thm]{Remark}
\newcommand{\dpar}[2]{\dfrac{\partial #1}{\partial #2}}
\newcommand{\Z}{\mathbb Z}
\newcommand{\N}{\mathbb N}
\renewcommand{\P}{\mathbb P}
\newcommand{\bba}{\mathbf{a}}
\renewcommand{\bbf}{\mathbf{f}}
\newcommand{\bbn}{\mathbf{n}}
\newcommand{\bbx}{\mathbf{x}}
\newcommand{\bby}{\mathbf{y}}
\newcommand{\bsa}{{\bm {\alpha}}}
\newcommand{\bsb}{{\bm {\beta}}}
\newcommand{\balpha}{{\bm {\alpha}}}
\newcommand{\bmu}{{\bm {\mu}}}
\newcommand{\blambda}{{\bm {\lambda}}}
\newcommand{\bsigma}{{\bm {\sigma}}}
\newcommand{\bnu}{{\bm {\nu}}}
\newcommand{\Id}{\textbf{Id}}
\newcommand{\II}{\mathcal{I}}
\newcommand{\GG}{\mathcal{G}}
\newcommand{\hbbf}{\hat{\mathbf{f}}}
\newcommand{\DD}{\mathcal{D}}
\newcommand{\dg}{dG\xspace}
\newcommand{\pampa}{PamPa\xspace}
\newcommand{\dof}{\text{DoFs}}
\newcommand{\remi}[1]{{#1}}
\title{Some new properties of an  Active flux type scheme: \pampa  }
\author{R\'{e}mi Abgrall $(*)$ , Yongle Liu $(*)$ and Philipp \"{O}ffner $(\dagger)$\\
$(*)$Institute of Mathematics, University of Z\"{u}rich, 8057 Z\"{u}rich, Switzerland \\
$(\dagger)$Institute of Mathematics, Clausthal University of Technology, \\D-38678 Clausthal-Zellerfeld, Germany\\
remi.abgrall@math.uzh.ch, yongle.liu@math.uzh.ch, philipp.oeffner@tu-clausthal.de}
\date{}
\DeclareMathOperator{\diag}{diag}
\begin{document}

\maketitle

\begin{abstract}
In this paper, we provide a few new properties of Active Flux (AF)/Point-Average-Moment PolynomiAl-interpreted (\pampa) schemes. 
First, we show, in full generality, that the {AF/\pampa} schemes can be interpreted in such a way that the discontinuous Galerkin (dG) scheme is one of their building blocks. 
Secondly we provide  {intrinsic bound preserving properties of the current variant of \pampa}. This is also illustrated numerically. Last, we show, at least in one dimension, that the \pampa scheme has the summation by part (SBP) property.
\end{abstract}




\section{Introduction.}\label{sec1}
Since the seminal work of P.L. Roe and his students, \cite{AF1,AF2,AF3,EymannRoe2013,Maeng,He}, there has been a growing interest in the so-called Active Flux schemes, see e.g. \cite{AF5,Barsukow,AF4,Abgrall_AF,Abgrall2024_WBAF,BarsukowAbgrall,Pampa1D,BP_Pampa_VEM} { for hyperbolic conservation laws,
\begin{equation}
\label{eq:hyperbolic}
\dpar{\bbu}{t}+\text{div }\bbf(\bbu)=0, \bbx\in \Omega\subset\R^d, t\geq 0
\end{equation}
subjected to initial and boundary conditions. Bold letters will be used for vector valued functions. For example, the conserved variables will be denoted by bold letters ($\bbu$) when they belong to $\R^p$ with $p>1$, and non bold ones when $p=1$. The flux is $\bbf=(f_1, \ldots, f_d)$ is assumed to be at least $C^1$, and defined on $\mathcal{D}\subset \R^p$. This set is called the invariant domain, for example in the case of the Euler equations, it is defined by imposing a strictly positive density and internal energy.} 

These schemes evolve simultaneously the average values and point values on the boundary of elements, that can be quads, triangles and polygons in several space dimensions, and intervals in 1D. In these papers, several progress have been made: we are now able to handle non linear cases without spurious oscillations. Very high order schemes have been developed, see e.g. \cite{BarsukowAbgrall,barsukow2025generalizedactivefluxmethod}. Conformal and non conformal meshes can be used \cite{Calhoun2023-ms,BP_Pampa_VEM}.  Several very intriguing properties have been noticed, and in our opinion partially explained: the schemes, at least in the Cartesian version, seems to have very interesting properties for the low Mach flows
\cite{Barsukow}. 

 { What is missing is to understand these properties: this is the motivation of this paper. The present paper will  not  answer all the questions, but to provide some new links to more classical schemes, and to show that  some facts that were known in one dimension (see e.g. \cite{Pampa1D}) are not specific to one dimensional problems.

In this paper we are interested in three things. We first show the connection of the \pampa scheme with discontinuous Galerkin (\dg) schemes. In a way, the \pampa scheme is a \ldots continuous discontinuous Galerkin scheme. We show that for the 1D and multi D version of the scheme, for any order and any type of elements (interval, and simplex in several dimensions).

The second contribution is about bound preservation.
In \cite{BP_Pampa_VEM} it was noticed that the behavior of the average values is always smoother than that of the point values. In this paper, we show several intrinsic bound preserving properties on the average values.
}

{The last contribution is about summation by parts properties of the \pampa scheme in one dimension.}

{During the review process, we have been made aware of the reference \cite{connard} which has  a non empty overlap with our section \ref{sec2}. This paper has been sent to ArXiv at the same time as we submitted   posted this paper on ArXiv.  In section \ref{sec2}, we go above third order accuracy, or the one dimensional or Cartesian setting, however the main idea of connecting  the approximation of \eqref{eq:advec} to finite element method is essentially the same.}

{Our scheme has of course a strong connection to the Active Flux family of schemes. Indeed, one of us has been very much inspired by P.L. Roe's work, and in particular the references we have listed above, and also personal discussions. He also  had the opportunity to listen to Franco Brezzi's talks at the ICM in 2014. This talk was about Virtual Finite Elements (VEM), and is a subset of \cite{brezzi}. The connections were obvious, in particular how functions were approximated. For that reason, and because we do not use exact evolution operators, but the method of line, we decided \remi{not} to use the vocable of ``active flux'', but something else: our flux is inactive, and ``inactive flux'' would not have been very respectful. The naming of \pampa was introduced in \cite{Pampa1D}, and we have sticked to it since, even if the question of invariant domain preservation is not always central.}

\section{Reinterpretation of the \pampa scheme.}\label{sec2}
{In this section, we describe how the \pampa scheme can be reinterpreted starting from  the \dg method. 

In a first step, we show in one dimension and several dimensions that in each element, we can rewrite the \dg method using standard duality tools of linear algebra, in the case of a linear with constant coefficient hyperbolic. This is done for segments in one dimension, and simplex in two dimensions. The extension to 3D is straightforward. Using this, we show that the \pampa scheme of \cite{Abgrall_AF,Abgrall2023b,BarsukowAbgrall} is nothing more, when using a Runge-Kutta (RK) time procedure than, for each RK cycle, one step of \dg followed by a projection onto
 globally continuous approximations. The case of non linear problems is also described, and we discuss several projections.
 }

\subsection{Standard \pampa Scheme.}
We consider a tessellation of the 1-D spatial domain $\Omega$ in non overlapping elements $I_\jph=[x_j,x_{j+1}]$ with uniform size {$\dx_{j+1/2}=x_{j+1}-x_j$}. For sake of simplicity, we consider the advection equation:
\begin{equation}\label{eq:advec}
  u_t+au_x=0
\end{equation}
{with $a>0$. We will write the flux as $f(u)=au$.}

In the standard third-order \pampa (or the so-called generalized AF) scheme, the solution of \eqref{eq:advec} is approximated by a globally continuous finite element polynomial expansion\footnote{Following the finite element convention, we use the subscript ${\mathrm{h}}$ to indicate a finite element approximation of a variable.} $u_{\mathrm h}$  within each element $I_\jph$:
{\begin{equation}\label{eq:FEMapp}
  u^{j+1/2}_{\mathrm h}(x)=u_j \varphi_0(\xi)+\xbar{u}_\jph\varphi_1(\xi)+u_{j+1}\varphi_2(\xi),\quad \xi=\frac{x-x_j}{\dx_{j+1/2}},\quad x\in I_\jph,
\end{equation}  }
where the quadratic polynomial basis functions are 
\begin{equation*}
  \varphi_0=(1-\xi)(1-3\xi),\quad \varphi_1=6\xi(1-\xi), \quad \varphi_2=\xi(3\xi-2).
\end{equation*}

The \pampa scheme is, for all $j\in \Z$: the average values evolve with
\begin{subequations}\label{eq:PAMPA}
\begin{equation}\label{eq:PAMPA:ave}
  \dx_{j+1/2}\frac{\mathrm d\xbar{u}_{j+1/2}}{\mathrm dt}+a(u_{j+1}-u_j)=0,
  \end{equation}
  and the point values with
  \begin{equation}\label{eq:PAMPA:pt}
  \dx_{j+1/2}\frac{\mathrm d u_{j+1}}{\mathrm dt}+a\big( 2u_j+4u_{j+1}-6\xbar{u}_{j+1/2} \big)=0.
\end{equation}
\end{subequations}
{The rational behind the relation for $u_{j+1}$ is that one can approximate the spatial derivative or $u_{\mathrm h}$ at $x=x_{j+1}$ by using the approximation of $u$ either on $I_{j+1/2}$ of $I_{j+3/2}$ or combinations of the derivatives. Since $a>0$, we take the approximation using information from the downwind interval, i.e. $I_{j+1/2}$, so that
$$\dfrac{\mathrm d u^{j+1/2}_h}{\mathrm d x}(x_{j+1})=\dfrac{2u_j+4u_{j+1}-6\xbar{u}_{j+1/2}}{\Delta x_{j+1/2}}.$$}

{If $a$ were negative, the relation \eqref{eq:PAMPA:pt} would be replaced by
$$\dx_{j+3/2}\frac{\mathrm d u_{j+1}}{\mathrm dt}+a\big( 6\xbar{u}_{j+3/2}-4u_{j+1}-2u_{j+2}\big)=0$$
or equivalently by
$$\dx_{j+1/2}\frac{\mathrm d u_{j}}{\mathrm dt}+a\big( 6\xbar{u}_{j+1/2}-4u_{j}-2u_{j+1}\big)=0.$$}
Now, we reinterpret it as an \dg scheme. 

{The mass matrix  $\mathtt{M}$ given by $\mathtt{M}_{i,j}= \int_{x_j}^{x_{j+1}} \varphi_i(\xi) \varphi_j(\xi) \mathrm{d} \xi$ is 
\begin{equation}\label{eq:mass_dG}
  \mathtt{M}=\dx_{j+1/2}\begin{pmatrix}
        \frac{2}{15} & -\frac{1}{10} & -\frac{1}{30} \\
           -\frac{1}{10} & \frac{6}{5} & -\frac{1}{10} \\
           -\frac{1}{30} & -\frac{1}{10} & \frac{2}{15} 
       \end{pmatrix}
\end{equation}
and its inverse is
\begin{equation}\label{eq:invmass_dG}
  \mathtt{M}^{-1}=\frac{1}{\dx_{j+1/2}}\begin{pmatrix}
            9 & 1 & 3 \\
           1 & 1 & 1 \\
           3 & 1 & 9 
       \end{pmatrix}
\end{equation}
in each  element $I_{j+\frac{1}{2}}$.}
On the other hand, we have the following update procedure
\begin{equation}\label{eq:flux_dG}
\begin{aligned}
  &\int_{I_\jph}\varphi_0\big (\frac{x-x_j}{\dx_{j+1/2}}\big ) u_{\mathrm h}'(x)\;{\mathrm d}x=\xbar{u}_\jph-\frac{u_j+u_{j+1}}{2},\\
  &\int_{I_\jph}\varphi_2\big (\frac{x-x_j}{\dx_{j+1/2}}\big ) u_{\mathrm h}'(x)\;{\mathrm d}x=u_{j+1}-u_j,\\ 
  &\int_{I_\jph}\varphi_1 \big (\frac{x-x_j}{\dx_{j+1/2}}\big )u_{\mathrm h}'(x)\;{\mathrm d}x=\frac{u_j+u_{j+1}}{2}-\xbar{u}_\jph.
\end{aligned}
\end{equation}
So that, we get in total 
\begin{equation}\label{eq:dG_form1}
  \mathtt{M}\frac{\mathrm d}{\mathrm dt}\begin{pmatrix}
                                 u_j \\
                                 \xbar{u}_\jph \\
                                 u_{j+1} 
                               \end{pmatrix}
  +a\begin{pmatrix}
      \xbar{u}_\jph-\frac{u_j+u_{j+1}}{2} \\
      u_{j+1}-u_j \\
      \frac{u_j+u_{j+1}}{2}-\xbar{u}_\jph 
    \end{pmatrix}=0,
\end{equation}
which can be further written as, using \eqref{eq:invmass_dG}:
\begin{equation}\label{eq:dG_form2}
  \frac{\mathrm d}{\mathrm dt}\begin{pmatrix}
                                 u_j \\
                                 \xbar{u}_\jph \\
                                 u_{j+1} 
                               \end{pmatrix}
  +\frac{a}{{\dx_{j+1/2}}}\begin{pmatrix}
      6\xbar{u}_\jph-4u_j-2u_{j+1} \\
      u_{j+1}-u_j \\
      2u_j+4u_{j+1}-6\xbar{u}_\jph 
    \end{pmatrix}=0.
\end{equation}

We also note that
\begin{equation*}
  \frac{a}{\dx_{j+1/2}}\begin{pmatrix}
      6\xbar{u}_\jph-4u_j-2u_{j+1} \\
      u_{j+1}-u_j \\
      2u_j+4u_{j+1}-6\xbar{u}_\jph 
    \end{pmatrix}=\begin{pmatrix}
                    f'(u_j) \\
                    f(u_{j+1})-f(u_j)\\
                    f'(u_{j+1}) 
                  \end{pmatrix}.
\end{equation*}
{In the general case of $a\in \R$, we can  interpret \pampa as follows:}
\begin{enumerate}
  \item Run \dg as here,
  \item  Drop the update of $u_j$ 
  \begin{itemize}
      \item if $a>0$: 
  coming from the interval $I_\jph$ and keep the internal DoFs computed from dG. 
  \item If $a<0$, coming from the interval $I_\jmh$ and keep the internal DoFs computed from dG.
  \end{itemize}
\end{enumerate}
\remi{This justifies the naming of "Continuous \dg": starting at each Runge-Kutta cycle by a globally continuous approximation, we first apply a standard \dg procedure, taking into account the simplifications introduced by the global continuity. After this cycle, each point value degree of freedom is multi-valued: there is, a priori, one value per element that share this degree of freedom. Then we need to project back on the original space, this is done by taking into account upwinding. This procedure is generalized in several dimensions in the next sections}

\subsection{High order \pampa scheme.}
In the high order case, we take $m_i(x)=\big (\frac{x-x_j}{\dx_{j+1/2}}\big)^i$ in the interval $I_\jph$ and define the degrees of freedom (DoFs) for degree $k$ (where the formal order of the scheme is $k+1$) as
\begin{equation}\label{eq:DoFs}
  u_0\approx u(x_j),\quad u_1\approx u(x_{j+1}),\quad u_{l+2}=\int_{I_\jph}m_{l}(x)u(x)\;{\mathrm d}x,~\text{for} ~0\leq l\leq k-2.
\end{equation}
Note that in \eqref{eq:DoFs}, we have changed the ordering of the corresponding coefficients  due to the high-order approach.
We denote the linear forms as follows:
\begin{equation*}
  \langle \theta_l,u\rangle=u_l, \quad 0\leq l\leq k
\end{equation*}
where $\theta_l$ is a bounded linear functional. We denote the dual basis in $\P^k$ by  $\{\varphi_q\in\mathbb{P}^k\}$, $q=0, \ldots, k$ such that $\langle \theta_l,\varphi_q \rangle=\delta_{lq}$. In $I_\jph$, $u(x)$ is approximated as
\begin{equation}\label{eq:FEMu}
  \uh(x)=\sum_{q=0}^{k}u_q\varphi_q(x).
\end{equation}

In the  \pampa method, we first compute the point value evolution equations
\begin{subequations}\label{eq:PAMPA_HO}
\begin{equation}
\begin{split}\label{eq:PAMPA_HO:pt}
  \frac{\mathrm d u_0}{\mathrm dt}&+a\frac{\mathrm d \uh}{\mathrm dx}(x_j)=0,\\
  \frac{\mathrm d u_1}{\mathrm dt}&+a\frac{\mathrm d \uh}{\mathrm dx}(x_{j+1})=0,\\
  \end{split}
  \end{equation}
  and for the moments {for} $0\leq l\leq k-2$, we write
  \begin{equation}\label{eq:PAMPA_HO:ave}
  \frac{\mathrm d u_{l+2}}{\mathrm dt}+\int_{I_\jph}a\, m_{l}(x)\,\frac{\mathrm d \uh}{\mathrm dx}\;\mathrm dx=0.
\end{equation}
\end{subequations}
Using the same basis for $\mathbb{P}^k$, the \dg method writes
\begin{equation*}
  \mathtt{M}\frac{\mathrm d \bbU}{\mathrm dt}+\bbF=0
\end{equation*}
with $\mathtt{M}_{l\kappa}=\int_{I_\jph}\varphi_l(x)\varphi_\kappa(x)\;{\mathrm d}x$ and 
\begin{equation*}
  \bbF_\kappa=\int_{I_\jph} a\varphi_\kappa(x)\frac{\mathrm d\uh}{\mathrm dx}\;{\mathrm d}x.
\end{equation*} 
The question is to compare them.

For this, using Riesz theorem, we can write $\langle\theta_l,u\rangle=(\psi_l,u)$ where 
\begin{equation*}
  (u,v)=\int_{I_\jph}u(x)v(x)\;{\mathrm d}x.
\end{equation*} and $\psi_l\in \P^k$ again.
Clearly, for $l\geq 0$, $\psi_{l+2}=m_{l}$. For $l=0,1$ (corresponding to the vertices $x_j$ and $x_{j+1}$), we also write $\psi_l$  as linear combinations of the $\varphi_q$s: 
\begin{equation*}
  \psi_l=\sum_{q=0}^{k}a_{lq}\varphi_q
\end{equation*}
and from the orthogonality condition {$\delta_{lp}=\langle\theta_l,\varphi_p\rangle=(\psi_l,\varphi_p)$},
\begin{equation*}
\delta_{lp}{=(\psi_l,\varphi_p)=\sum_{q=0}^ka_{lq}\big (\varphi_q,\varphi_p\big )}=\sum_{q=0}^{k}a_{lq}\mathtt{M}_{qp}
\end{equation*}
that we interpret as saying that the inverse of $\mathtt{M}$ is the matrix $(a_{lq})$.

Then we compute $\mathtt{M}^{-1}\bbF$ in a component-wise manner, and get
\begin{equation}\label{eq_General}
  \begin{aligned}
  (\mathtt{M}^{-1}\bbF)_p&=\sum_{l=0}^{k}a_{pl}F_l=\sum_{l=0}^{k}a_{pl}\int_{I_\jph}a\varphi_l\frac{\mathrm d \uh}{\mathrm dx}{\mathrm d}x\\
  &=a\; \int_{I_\jph}\Big(\sum_{l=0}^{k}a_{pl}\varphi_l\Big)\frac{\mathrm d \uh}{\mathrm dx}{\mathrm d}x\\
  &=a\;\int_{I_\jph}\psi_p\frac{\mathrm d \uh}{\mathrm dx}{\mathrm d}x.
  \end{aligned}
\end{equation}
{The last step is to show that 
$$a\int_{I_\jph}\psi_p\frac{\mathrm d \uh}{\mathrm dx}{\mathrm d}x=a\dfrac{\mathrm{d}\uh}{\mathrm dx}(x_{j+p})$$ for $p=0,1$, i.e. the two linear forms associated with the points values.
This is indeed the fact if $a\in \R$ is a constant and since $\frac{\mathrm d \uh}{\mathrm dx}\in \P^k$ if $\uh\in \P^k$.}

This shows that we recover the \pampa that we can again interpret as 
\begin{enumerate}
  \item Run \dg as here,
  \item Drop the update of $u_j$ 
  \begin{itemize}
      \item if $a>0$: 
  coming from the interval $I_\jph$ and keep the internal DoFs computed from dG. 
  \item If $a<0$, coming from the interval $I_\jmh$ and keep the internal DoFs computed from dG.
  \end{itemize}
\end{enumerate}

{ \subsection{\pampa for triangles.}\label{sec:triangle}
We consider the scalar problem $u_t+\text{ div }\bbf=0$ and the case of a triangle $K$. We want that the approximation space contains $\P^k$ for the accuracy.  We denote the vertices by $\bba_i$, $i=1,2,3$ and $\lambda_i$ the barycentric coordinates of $K$. The DoFs are these vertices and the other $k-1$ Gauss--Lobatto points on the three edges, and the $k(k-1)/2$ additional moments \cite{BP_Pampa_VEM}. All of these DoFs assemble the approximation space with dimension $N_{\dof}=3+3(k-1)+k(k-1)/2=3k+k(k-1)/2$. 

The dimension of $\P^k$ is $(k+1)(k+2)/2$. The number of Lagrange points on each edge, taking into account the vertices as Lagrange points, is $3k$. We have
$$3k+k(k-1)/2-3k=\frac{k(k-1)}{2}={\text{dim }\P^{k-2}}.$$
These equalities can be rephrased as follows. We set $B_\bmu=c_\bmu \lambda_1^{\mu_1}\lambda_2^{\mu_2}\lambda_3^{\mu_3}$ the B\'ezier polynomials. For the sake of simplicity, we will write
$$\lambda_1^{\mu_1}\lambda_2^{\mu_2}\lambda_3^{\mu_3}=\blambda^\bmu.$$
The multi-index $\bmu$ is $\bmu=(\mu_1,\mu_2,\mu_3)$. We set $\vert \bmu\vert=\mu_1+\mu_2+\mu_3$.  The coefficients $c_\bmu$ are binomial coefficients so that the $B_\bmu$s can be \remi{seen} as the terms in the development of $(\lambda_1+\lambda_2+\lambda_3)^k$.  We define the two vector spaces:
$$V_1=\text{span}\{ B_\bmu, \vert \bmu\vert=k \text{ and } \mu_1\mu_2\mu_3=0\}$$
and
$$V_2=\text{span}\{ B_\bmu, \vert \bmu\vert=k \text{ and } \mu_1\mu_2\mu_3>0\}.$$
Clearly, $\P^k\subset V_1\bigoplus V_2$. The elements of $V_2$ vanish on the boundary of $K$. For the sake of clarity, we will denote the generator of $V_2$ with a specific notation
$$\psi_{\bmu}=\lambda_1\lambda_2\lambda_3 B_{\bmu}$$ 
with\begin{itemize}
\item if $1\leq k\leq 2$, $B_{\bmu}=1$,
\item and for $k\geq 3$, $B_{\bmu}=\blambda^{\bmu}$ with 
$$\vert \bmu\vert=k-2.$$
\end{itemize}
It is clear that $\text{dim }V_1=3k$ and $\text{dim }V_2=\text{ dim }\P^{k-2}$. We define
$$\mathcal{I}_k=\left \{ \begin{array}{ll}
\{\bmu, \vert \bmu\vert=k-2\}&\text{ for }k>2\\
\{(0,0,0)\}  &\text{ for }k\leq 2.
\end{array}
\right .$$
The cardinal of $\mathcal{I}_k$ is {$\text{ dim }\P^{k-2}$}. 
We will often write $b=\lambda_1\lambda_2\lambda_3$.

\bigskip

\noindent We introduce the following polynomial spaces:
\begin{itemize}
\item For $k\leq 2$, $V=\P^k\bigoplus b\R$,
\item and for $k>2$, $V=\P^k \bigoplus b\P^{k-2}$
\end{itemize}
We introduce the following degrees of freedom:\begin{subequations}\label{linear forms}
\begin{itemize}
\item For the Gauss-Lobatto points $\sigma$ on the boundary of $K$,
\begin{equation}\langle \delta_\sigma, p\rangle =p(\sigma).\label{linear form points}\end{equation}
The set of Gauss-Lobatto points is denoted by $\mathcal{G}$ and contains $3k$ points.
\item In the interior,
\begin{equation}\langle m_{\bmu}, p\rangle =\frac{1}{\vert K\vert}\int_K \blambda^{\bmu}(\bbx)p(\bbx)\; \mathrm d\bbx\label{linear forms moment}\end{equation}
for $\bmu\in \mathcal{I}_k$.
\end{itemize}\end{subequations}
They are linear forms on $V$.
\begin{lemma}
The linear forms \eqref{linear form points}-\eqref{linear forms moment} are uni-solvant on $V$.
\end{lemma}
\begin{proof}
Let $\alpha_\sigma$, $\sigma\in \mathcal{G}$ and $\alpha_{\bmu'}$, $\bmu'\in \mathcal{I}_k$ be real numbers such that for all $p\in V$
\begin{equation}
\label{test}\sum_{\sigma\in \mathcal{G}}\alpha_{\sigma} \langle\delta_\sigma,p\rangle+\sum_{\bmu'\in \mathcal{I}_k}\alpha_{\bmu'}\langle m_{\bmu'},p\rangle=0.
\end{equation}
We first consider the Lagrange polynomials $L_\sigma$  at  $\sigma$ and define
\begin{equation}
\label{basis function}
\varphi_\sigma=L_\sigma-\sum\limits_{\bmu\in \mathcal{I}_k}
\frac{m_{\bmu}(L_\sigma)}{m_{\bmu}(\psi_{\bmu})}\psi_{\bmu}
\end{equation}
Since $\psi_{\bmu}$ vanishes on $\partial K$ and $L_\sigma$ is a Lagrange polynomial for the Gauss-Lobatto points, we have
$$\langle \delta_{\sigma'},   \varphi_{\sigma}\rangle=\delta_{\sigma}^{\sigma'},$$
and by construction, for all $\bmu\in \mathcal{I}_k$, 
$$\langle m_\bmu, \varphi_{\sigma}\rangle=0.$$
Using this to test \eqref{test}, we see that $\alpha_\sigma=0$ for $\sigma\in \mathcal{G}$.

Then we test with $p=\psi_\bmu$, $\bmu\in \mathcal{I}_k$, that is 
\begin{equation}
\label{basis functions2}\sum_{\bmu'\in \mathcal{I}_k}\alpha_{\bmu'}\int_K  \lambda_1\lambda_2\lambda_3 \blambda^{\bmu'}\blambda^\bmu\; \mathrm d\bbx=0.
\end{equation}
The question reduces to that of the invertibility of the $\vert \II_k\vert \times \vert \II_k\vert$ symmetric matrix
$$A=\begin{pmatrix}\int_K  \lambda_1\lambda_2\lambda_3 \blambda^{\bmu'}\blambda^\bmu\; \mathrm d\bbx
\end{pmatrix}.$$
Taking $X\in \R^{\vert \II_k\vert}$, we see that
$$X^TAX=\int_K \lambda_1\lambda_2\lambda_3 \varphi^2(\bbx)\; \mathrm d\bbx$$
with
$$\varphi=\sum_{\bmu\in \II_k}X_{\bmu} \blambda^\bmu.$$
Since $\{\blambda^\bmu\}_{\bmu\in \II_k}$ is \remi{ a free family}, the matrix is positive definite. Thus the forms are unisolvant.
\end{proof}

\bigskip

The second step is to compute the dual basis of $\{\delta_\sigma, m_\bmu\}_{\sigma, \bmu}$, i.e. elements $\{\phi_\sigma, \phi_\bmu\}_{\sigma,\bmu}$ of $V$ such that for all $\sigma$,
$$\forall  \sigma', \langle \delta_\sigma, \varphi_{\sigma'}\rangle=\delta_\sigma^{\sigma'}\text{ and }\forall \bmu, \langle \delta_\sigma, \varphi_\bmu\rangle=0$$
and for all $\bmu$,
$$\forall \sigma,  \langle m_{\bmu}, \varphi_{\sigma}\rangle=0 \text{ and }\forall \bmu', \langle m_{\bmu}, \varphi_{\bmu'}\rangle=\delta_{\bmu}^{\bmu'}.$$

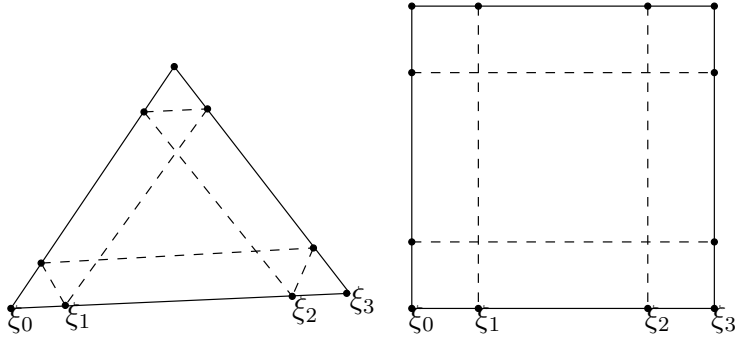
\begin{figure}
\begin{center}
\begin{tabular}{cc}
{\pgfkeys{/pgf/fpu/.try=false}%
 \ifdim\dimen1=0pt\ifdim\dimen3=0pt\dimen1=1000sp\dimen3\dimen1
  \else\dimen1\dimen3\fi\else\ifdim\dimen3=0pt\dimen3\dimen1\fi\fi
\begin{tikzpicture}[x=+\dimen1, y=+\dimen3]
{\ifx\XFigu\undefined\catcode`\@11
\def\temp{\alloc@1\dimen\dimendef\insc@unt}\temp\XFigu\catcode`\@12\fi}
\XFigu3946sp
\ifdim\XFigu<0pt\XFigu-\XFigu\fi
\tikzset{inner sep=+0pt, outer sep=+0pt}
\pgfsetfillcolor{black}
\pgftext[base,left,at=\pgfqpointxy{9825}{-8100}] {\fontsize{12}{14.4}\usefont{T1}{ptm}{m}{n}$\xi_3$}
\pgfsetlinewidth{+7.5\XFigu}
\pgfsetcolor{black}
\filldraw  (9750,-7725) circle [radius=+75];
\filldraw  (5475,-2100) circle [radius=+75];
\filldraw  (2175,-6975) circle [radius=+75];
\filldraw  (2775,-8025) circle [radius=+75];
\filldraw  (4725,-3225) circle [radius=+75];
\filldraw  (6300,-3150) circle [radius=+75];
\filldraw  (8925,-6600) circle [radius=+75];
\filldraw  (8400,-7800) circle [radius=+75];
\draw (1425,-8100)--(5475,-2100)--(9825,-7725)--(1425,-8100);
\pgfsetdash{{+60\XFigu}{+60\XFigu}}{++0pt}
\draw (6300,-3150)--(2775,-8025);
\draw (4725,-3225)--(8400,-7800);
\draw (2175,-6975)--(8925,-6600);
\draw (2175,-6975)--(2775,-8025);
\draw (8925,-6600)--(8400,-7800);
\draw (4725,-3225)--(6300,-3150);
\pgftext[base,left,at=\pgfqpointxy{1350}{-8550}] {\fontsize{12}{14.4}\usefont{T1}{ptm}{m}{n}$\xi_0$}
\pgftext[base,left,at=\pgfqpointxy{2775}{-8475}] {\fontsize{12}{14.4}\usefont{T1}{ptm}{m}{n}$\xi_1$}
\pgftext[base,left,at=\pgfqpointxy{8400}{-8325}] {\fontsize{12}{14.4}\usefont{T1}{ptm}{m}{n}$\xi_2$}
\pgfsetdash{}{+0pt}
\filldraw  (1425,-8100) circle [radius=+75];
\end{tikzpicture} }
 \ifdim\dimen1=0pt\ifdim\dimen3=0pt\dimen1=1000sp\dimen3\dimen1
  \else\dimen1\dimen3\fi\else\ifdim\dimen3=0pt\dimen3\dimen1\fi\fi
\begin{tikzpicture}[x=+\dimen1, y=+\dimen3]
{\ifx\XFigu\undefined\catcode`\@11
\def\temp{\alloc@1\dimen\dimendef\insc@unt}\temp\XFigu\catcode`\@12\fi}
\XFigu3946sp
\ifdim\XFigu<0pt\XFigu-\XFigu\fi
\tikzset{inner sep=+0pt, outer sep=+0pt}
\pgfsetfillcolor{black}
\pgftext[base,left,at=\pgfqpointxy{9525}{-8850}] {\fontsize{12}{14.4}\usefont{T1}{ptm}{m}{n}$\xi_3$}
\pgfsetlinewidth{+7.5\XFigu}
\pgfsetcolor{black}
\filldraw  (2100,-6750) circle [radius=+75];
\filldraw  (2100,-2550) circle [radius=+75];
\filldraw  (2100,-900) circle [radius=+75];
\filldraw  (3750,-900) circle [radius=+75];
\filldraw  (7950,-900) circle [radius=+75];
\filldraw  (9600,-900) circle [radius=+75];
\filldraw  (9600,-2550) circle [radius=+75];
\filldraw  (9600,-6750) circle [radius=+75];
\filldraw  (9600,-8400) circle [radius=+75];
\filldraw  (7950,-8400) circle [radius=+75];
\filldraw  (3750,-8400) circle [radius=+75];
\draw (2100,-900)--(2100,-8400)--(9600,-8400)--(9600,-900)--(2100,-900);
\pgfsetdash{{+60\XFigu}{+60\XFigu}}{++0pt}
\draw (3750,-900)--(3750,-8400);
\draw (7950,-900)--(7950,-8400);
\draw (2100,-2550)--(9600,-2550);
\draw (2100,-6750)--(9600,-6750);
\pgftext[base,left,at=\pgfqpointxy{2025}{-8850}] {\fontsize{12}{14.4}\usefont{T1}{ptm}{m}{n}$\xi_0$}
\pgftext[base,left,at=\pgfqpointxy{3675}{-8850}] {\fontsize{12}{14.4}\usefont{T1}{ptm}{m}{n}$\xi_1$}
\pgftext[base,left,at=\pgfqpointxy{7875}{-8850}] {\fontsize{12}{14.4}\usefont{T1}{ptm}{m}{n}$\xi_2$}
\pgfsetdash{}{+0pt}
\filldraw  (2100,-8400) circle [radius=+75];
\end{tikzpicture}}%
\end{tabular}
\end{center}\label{fig:1}
\caption{Interpolation points for the triangular and quadrangular, cubic case.}
\end{figure}

We have the following lemma:
\begin{lemma}[Dual basis]\label{lemma:dual_basis}
The dual basis of the forms $\delta_\sigma, m_{\bmu}$ consists in the elements of $V$ defined by:
\begin{itemize}
\item Associated to the Gauss-Lobatto points, the family $\{\varphi_\sigma\}$ defined by \eqref{basis function}. 
\item Associated to the moments: the elements $\varphi_{\bmu}$ defined by
$$\varphi_{\bmu}=\lambda_1\lambda_2\lambda_3\big (\sum_{\bnu\in \II_k}a_{\bmu\bnu}\lambda^{\bnu}\big )$$
where the $\{a_{\bmu\bnu}\}$ are solution of the linear system
$$\balpha\big ( a_{\bmu\bnu}\big )=\big( \delta_{\bmu}^{\bnu}\big)=\Id_{\vert \II_k\vert\times\vert\II_k\vert}$$
with 
$$\alpha_{\bmu,\bnu}=2\vert K\vert \frac{(\mu_1+\nu_1+1)!(\mu_2+\nu_2+1)(\mu_3+\nu_3+1)!}
{(2k+1)!}.
$$ This matrix is positive definite.
\item The coefficients $a_{\bmu\bnu}$ only depend on $\bmu+\bnu$, we write $a_{\bmu\bnu}=a_{\bmu+\bnu}$.
\end{itemize}
\end{lemma}
\begin{proof}
We need to  define the $\varphi_\bmu$s.
Since $\langle \delta_\sigma, \varphi_\bmu\rangle=0$ for all $\sigma$, we see that
$$\varphi_{\bmu}=\lambda_1\lambda_2\lambda_3\big (\sum_{\bnu\in \II_k}a_{\bmu\bnu}\lambda^{\bnu}\big )$$
and from the second set of family, that for all $\bnu'$, 
$$\sum_{\bnu\in \mathcal{I}_k}\alpha_{\bmu\bnu}\int_K  \lambda_1\lambda_2\lambda_3 \blambda^{\bnu}\blambda^{\bnu'}\; \mathrm d\bbx=\delta_{\bmu}^{\bnu'}.$$
\remi{We have} 
$$\int_K  \lambda_1\lambda_2\lambda_3 \blambda^{\bmu}\blambda^{\bnu}\; \mathrm d\bbx=
2\vert K\vert \frac{(\mu_1+\nu_1+1)!(\mu_2+\nu_2+1)(\mu_3+\nu_3+1)!}
{(2k+1)!}
$$
because $\bmu, \bnu\in \II_k$ so that $2+\sum_i\mu_i+\sum_i\nu_i+3=2(k-2)+5=2k+1$. For the same reason as above, this matrix is invertible because it is positive definite. 

We also see that since  $\alpha_{\bmu,\bnu}$ depends only on $\bmu+\bnu$, using for example Cramer's formula, we get that $a_{\bmu\bnu}$ only depends on $\bmu+\bnu$.
\end{proof}

For example for $k=3$ (which is the first non trivial example), {we have, for the moment $ m_\bmu$ with $\bmu=(\mu_1,\mu_2,\mu_3)$ where the $\mu_1+\mu_2+\mu_3=1$, all the $\mu_j$s are all equal to $0$ except for one which is equal to $1$. This leads to 
$$\varphi_\bmu=\lambda_1\lambda_2\lambda_3(a_{1}^k\lambda_1+a_{2}^k\lambda_2+a_3^k\lambda_3).$$} The matrix 
$$A=\frac{1}{6!}\begin{pmatrix}
6 & 4& 4\\
4&6&4\\
4&4&6\end{pmatrix}$$ is circulant. 
{Since} $X{=}(a_1,a_2,a_3)^T=
(\tfrac{1800}{7},-\frac{720}{7},-\frac{720}{7})^T$ is the solution of
$$A X=\begin{pmatrix} 1\\ 0 \\0\end{pmatrix}$$
the polynomials are
\begin{itemize}
    \item Moment $\lambda_1$: $\varphi_{(1,0,0)}=\lambda_1\lambda_2\lambda_3(a_1\lambda_1+a_2\lambda_2+a_3\lambda_3)$.
    \item Moment $\lambda_2$: $\varphi_{(0,1,0)}=\lambda_1\lambda_2\lambda_3(a_3\lambda_1+a_1\lambda_2+a_2\lambda_3)$.
    \item Moment $\lambda_3$: $\varphi_{(0,0,1)}=\lambda_1\lambda_2\lambda_3(a_2\lambda_1+a_3\lambda_2+a_1\lambda_3)$.
    \end{itemize}

\bigskip

From Riesz theorem, we know that for any linear form $\varphi^\star$ defined on $V$, there exists an element $\varphi\in V$ such that for all $p\in V$,
$$\langle \varphi^\star, p\rangle=\frac{1}{\vert K\vert} \int_K \varphi(\bbx)p(\bbx)\; \mathrm d\bbx$$
because 
$$(p,q)=\frac{1}{\vert K\vert} \int_K p(\bbx)q(\bbx)\; \mathrm d\bbx$$
is a scalar product on $V$.

From its definition, $m_{\bmu}$ for $\bmu\in \II_k$ is associated to $\blambda^\bmu$ since
$$\langle m_\bmu, p\rangle=\frac{1}{\vert K\vert}\int_K\blambda^\bmu p\; \mathrm d\bbx.$$
 We are interested in looking at $\psi_\sigma$ such that for all $p\in V$,
$$\langle \delta_{\sigma},p\rangle=\frac{1}{\vert K\vert}\int_K \psi_{\sigma} p\; \mathrm d\bbx.$$
We write
\begin{subequations}
\label{dualdual}
\begin{equation}\label{dualdual:1}\psi_{\sigma}=\sum\limits_{\sigma'\in \GG}\alpha_{\sigma\sigma'}\varphi_{\sigma'}+\sum\limits_{\bmu'\in \II_k}\alpha_{\sigma\bmu' }\varphi_{\bmu'}
\end{equation}
and
\begin{equation}\label{dualdual:2}\blambda^\bmu=\sum\limits_{\sigma'\in \GG}\alpha_{\bmu\sigma'}\varphi_{\sigma'}+\sum\limits_{\bmu'\in \II_k}\alpha_{\bmu\bmu' }\varphi_{\bmu'}
\end{equation}
and get
$$\vert K\vert \delta_{\sigma\sigma'}=\sum\limits_{\sigma'\in \GG}\alpha_{\sigma\sigma'}\int_K\varphi_{\sigma}\varphi_{\sigma'}\; \mathrm d\bbx+\sum\limits_{\bmu'\in \II_k}\alpha_{\sigma\bmu' }\int_K\varphi_{\sigma} \varphi_{\bmu'}\; \mathrm d\bbx,$$
$$0=\sum\limits_{\sigma'\in \GG}\alpha_{\sigma\sigma'}\int_K\varphi_{\bmu}\varphi_{\sigma'}\; \mathrm d\bbx+\sum\limits_{\bmu'\in \II_k}\alpha_{\sigma\bmu' }\int_K\varphi_{\bmu} \varphi_{\bmu'}\; \mathrm d\bbx,$$
$$0=\sum\limits_{\sigma'\in \GG}\alpha_{\bmu\sigma'}\int_K\varphi_\sigma\varphi_{\sigma'}\; \mathrm d\bbx+\sum\limits_{\bmu'\in \II_k}\alpha_{\bmu\bmu' }\int_K\varphi_\sigma \varphi_{\bmu'}\; \mathrm d\bbx,$$
$$\vert K\vert \delta_{\bmu\bmu'}=\sum\limits_{\sigma'\in \GG}\alpha_{\bmu\sigma'}\int_K\varphi_\bmu\varphi_{\sigma'}\; \mathrm d\bbx+\sum\limits_{\bmu'\in \II_k}\alpha_{\bmu\bmu' }\int_K\varphi_{\bmu} \varphi_{\bmu'}\; \mathrm d\bbx$$
that is
\begin{equation}\label{dualdual:3}
\begin{split}
\vert K\vert &\begin{pmatrix}
\Id_{(3k)\times(3k)} & \mathbf{0}_{(3k)\times \vert \II_k\vert}\\
\mathbf{0}_{\vert \II_k\vert \times(3k)}&\Id_{\vert \II_k\vert\times \vert \II_k\vert}\end{pmatrix}
=
\begin{pmatrix}
(\alpha_{\sigma\sigma'})_{(3k)\times(3k)} & (\alpha_{\sigma\bmu })_{(3k)\times \vert \II_k\vert}\\
(\alpha_{\sigma\sigma'})_{\vert \II_k\vert \times(3k)}& (\alpha_{\mu\bmu' })_{\vert \II_k\vert\times \vert \II_k\vert}
\end{pmatrix}\\
&\qquad \qquad\times\begin{pmatrix}
\big ( \int_K\varphi_{\sigma}\varphi_{\sigma'}\; \mathrm d\bbx\big )_{(3k)\times(3k)}& \big ( \int_K\varphi_{\sigma}\varphi_{\bmu'}\; \mathrm d\bbx\big )_{(3k)\times \vert \II_k\vert}\\
\big ( \int_K\varphi_{\bmu}\varphi_{\sigma}\; \mathrm d\bbx\big )_{\vert \II_k\vert \times(3k)}& \big ( \int_K\varphi_{\bmu}\varphi_{\bmu'}\; \mathrm d\bbx\big )_{\vert \II_k\vert\times \vert \II_k\vert}
\end{pmatrix}.
\end{split}
\end{equation}
\end{subequations}
i.e. the matrix of the coordinates of the basis $\{\psi_{\bsigma},{ \blambda}^{\bmu} \}$ is the inverse of the mass matrix (up-to a factor $1/\vert K\vert$).

\paragraph{Application to the convection problem with constant advection speed.}
Then we extend what has been done in one dimension. We start from the problem
$$\dpar{u}{t}+\bba\cdot \nabla u=0$$ in $K$, assuming an initial condition in $V$. At $t=0$, since $\bba$ is constant, and from the definition of the elements of $V$, $\bba\cdot\nabla u\in V$, and then we have initially
\begin{equation}
\begin{split}
\dpar{}{t}\langle \delta_\sigma, u\rangle&+\langle \delta_\sigma, \bba\cdot \nabla u\rangle=0\\
\dpar{}{t}\langle m_\bmu, u\rangle&+\langle m_\bmu, \bba\cdot \nabla u\rangle=0
\end{split}
\end{equation}
that we can equivalently write as 
\begin{equation}
\begin{split}
\dpar{}{t}\int_K\psi_\sigma u\; \mathrm d\bbx&+\int_K \psi_\sigma  \bba\cdot \nabla u\; \mathrm d\bbx=0\\
\dpar{}{t}\int_K \blambda^\mu u\; \mathrm d\bbx&+\int_K \blambda^\bmu  \bba\cdot \nabla u\; \mathrm d\bbx=0.\\
\end{split}
\end{equation}
Then using \eqref{dualdual}, and in particular \eqref{dualdual:3},  we get
\begin{equation}\label{dual:linearadvection}
M\dfrac{\mathrm d}{\mathrm dt}\begin{pmatrix}
\int_K\varphi_\sigma u\; \mathrm d\bbx\\
\int_K\varphi_\bmu u\; \mathrm d\bbx.
\end{pmatrix}
+ \begin{pmatrix}
\int_K \varphi_\sigma  \bba\cdot \nabla u\; \mathrm d\bbx\\
\int_K \varphi_\bmu  \bba\cdot \nabla u\; \mathrm d\bbx
\end{pmatrix}=0
\end{equation}
\begin{rmk}
If, instead of writing the time continuous version of the problem, we would have had started from a semi-discretisation in time, for example the explicit Euler forward scheme,
$$u^{n+1}=u^n+\Delta t \bba\cdot \nabla u^n,$$
we would have got
\begin{equation*}
M\bigg (\begin{pmatrix}
\int_K\varphi_\sigma u\; \mathrm d\bbx\\
\int_K\varphi_\bmu u\; \mathrm d\bbx
\end{pmatrix}^{n+1}-\begin{pmatrix}
\int_K\varphi_\sigma u\; \mathrm d\bbx\\
\int_K\varphi_\bmu u\; \mathrm d\bbx
\end{pmatrix}^{n}\bigg )
+ \dt\begin{pmatrix}
\int_K \varphi_\sigma  \bba\cdot \nabla u\; \mathrm d\bbx\\
\int_K \varphi_\bmu  \bba\cdot \nabla u\; \mathrm d\bbx
\end{pmatrix}=0.
\end{equation*}
This is maybe a more rigorous way to proceed.
\end{rmk}

The method can be reinterpreted as in one dimension. We start from 
the \dg approximation, setting
$$u_{\mathrm h}=\sum_{\sigma}\langle\delta_\sigma, u\rangle\varphi_\sigma+\sum_{\bmu} \langle m_\bmu,u\rangle\phi_\bmu,$$ let this evolve in time by
$$\mathtt{M}\dfrac{\mathrm d}{\mathrm dt}\bbU+\bbF=0$$
with
$$\bbU=\big(\{\langle\delta_\sigma,u\rangle\}, \{\langle m_\bmu, u\rangle\}\big)^T$$ and 
$\bbF=(\{F_\sigma\}, \{F_\bmu\})^T$ with
$$F_\sigma=-\int_K \nabla \varphi_\sigma\cdot \bbf(u_{\mathrm h})\;\mathrm d\bbx+\oint_{\partial K}\varphi_\sigma \bbf(u_{\mathrm h})\cdot \bbn \; \mathrm d\gamma$$ and
$$F_\bmu=-\int_K \nabla \varphi_\bmu\cdot \bbf(u_{\mathrm h})\;\mathrm d\bbx+\oint_{\partial K}\varphi_\bmu \bbf(u_{\mathrm h})\cdot \bbn \; \mathrm d\gamma$$
and then we get
$$\dfrac{\mathrm dU}{\mathrm dt}+M^{-1} \begin{pmatrix} F_\sigma\\ F_\bmu\end{pmatrix}=0.$$

    \begin{rmk}[Scheme for quadrangles]
    The calculations are the same once we have evaluated the basis functions. They can be evaluated in the same way as for triangles.
    \end{rmk}

\begin{rmk}[Scheme for Polygons]
The extension of \cite{Abgrall2023b} to polygons has been made in \cite{BP_Pampa_VEM}. The extension of what has been done in this section to polygons is quite natural, and will be the topic of a future publication.
\end{rmk}
}

\subsection{Interpretation of the \pampa scheme.}\label{se_interpretation}
Again, we are dealing with the linear advection equation, with a constant speed. We will introduce the flux, $\bbf(u)=\bba\, u$. 
For the sake of simplicity, we reduce ourselves to the case of third order, i.e. DoFs are point values and average, and we discuss only the one dimensional and  triangular cases for simplicity. In the triangular case, again $\lambda_1$, $\lambda_2$ and $\lambda_3$ are the barycentric coordinates. The \dg scheme in one element writes
$$\mathtt{M}_K\dfrac{\mathrm dU_K}{\mathrm dt}+F=0$$ where $U_K=(\{u_{\sigma}\}_{\sigma\in  K}, \bar{u}_K)$ and $F=(\{F_{\sigma}\}_{\sigma\in  K}, \bar{F}_K)$ with 
$$\begin{array}{l}F_{\sigma}=
    -\int_{{K}} \nabla \varphi_{\sigma} \cdot \bbf(\uh)\; \mathrm d\bbx+\int_{\partial K}\varphi_{\sigma}\bbf(\uh)\cdot \bbn \; \mathrm d\gamma,\\ ~ \\\bar{F}_K=-\int_{{K}} \nabla \bar\varphi \cdot \bbf(\uh)\; \mathrm d\bbx+\int_{\partial K}\bar \varphi\bbf(\uh)\cdot \bbn \; \mathrm d\gamma
    \end{array}
$$

The Euler forward is
\begin{subequations}\label{interpretation}
\begin{equation}\label{update}
U_{\vert K}^{n+1}=U_{\vert K}^n-\Delta t M_K^{-1}F_K
\end{equation}
We note that
$$M_K^{-1}=P_K=\frac{1}{|K|} \mathcal{P}$$ where the matrix $\mathcal{P}$ does not depend on $K$, because we are using barycentric coordinate: this is as if working in the reference element.  From \eqref{update}, and using the previous results, we see that the average evolves as
$$\bar{u}_K^{n+1}=\bar{u}_K^n-\frac{\Delta t}{\vert K\vert}\int_{\partial K} \bbf(\uh)\cdot \bbn\; \mathrm d\gamma$$ and the point values as
$$u_{\sigma,K}^{n+1}=u_\sigma^n-\frac{\Delta t }{\vert K\vert}\bigg (\mathcal{P}^{-1}F_K\bigg )_\sigma$$
where, using the previous results, we have
$$\bigg (\mathcal{P}^{-1}F_K\bigg )_\sigma=\int_{K}\nabla\varphi_\sigma\cdot \bbf(\uh)\; \mathrm d\bbx+\int_{\partial K}\varphi_{\sigma}\bbf(\uh)\cdot \bbn\; \mathrm d\gamma= \bba\cdot \nabla \uh(\sigma).$$
{ We have written $u_{\sigma,K}^{n+1}$ to emphasis that there are several values of $u_\sigma$ at time $n+1$. 
At time $t_n$, we have data in $V$, so that we need to project the family $\{u_{\sigma,K}^{n+1}\}$ onto $V$.}
One possibility is then: change nothing for the average, and for the points we do:
    \begin{equation}
        \label{projection}
u_\sigma^{n+1}=
\sum_{K, \sigma\in K}{\omega}_{K,\sigma}u_{\sigma,K}^{n+1}
    \end{equation}
\end{subequations}
with  weight ${\omega}_{\sigma,K}$ that are assumed to be positive and satisfy
$$\sum_{K, \sigma\in K}\omega_{K,\sigma}=\Id$$ to define a projection.
This amount to write 
$$ u_\sigma^{n+{1}}= u_\sigma^n-\Delta t \sum_{K, \sigma\in K} \frac{\omega_{K,\sigma}}{\vert K\vert}\big (\mathcal{P}^{-1}F_{K}\big )_{\sigma}.$$

\bigskip 
The next question is how to choose the weights $\omega_{K,\sigma}$? { Looking back at  \cite{Abgrall_AF} with this interpretation, where the mesh is 
$\{x_j\}_{j\in \Z}$ and the elements are $K_{j+1/2}=[x_j, x_{j+1}]$, the choice amounts to be, for $\sigma=x_j$
$$\omega_{K_{j+1/2},x_j}=\frac{a^+}{a^+ +(-a)^+}, \quad \omega_{K_{j-1/2},x_j}=\frac{(-a)^+}{a^+ +(-a)^+},$$
that is
$$\omega_{K_{j+1/2},x_j}=\text{sign}(a), \quad \omega_{K_{j-1/2},x_j}=\text{sign}(-a).$$

However, other choices are certainly possible. }It is easy to see that in 1D, there is an energy inequality when we project with a simple averaging procedure: assume now that the 
scheme writes
$$\bbu^{n+1}_j=\dfrac{ \Delta_{j+1/2}{u_{j, j+1/2}^{n+1}}+\Delta_{j-1/2}u_{j, j-1/2}^{ n+1}}{\Delta_{j+1/2}+\Delta_{j-1/2}}$$
with $\Delta_{l+1/2}=x_{l+1}-x_l$.

This defines an energy diagonal matrix, i.e. a norm, and one can show that
\begin{proposition}\label{pro_energy_stability}
We have the following energy inequality 
\begin{equation}\begin{split}
\sum_{j}(\Delta_{j+1/2}+\Delta_{j-1/2})(u_j)^2&=
\sum_{j}(\Delta_{j+1/2}+\Delta_{j-1/2})\bigg ( \dfrac{ \Delta_{j+1/2}u_{j, j+1/2}+\Delta_{j-1/2}u_{j, j-1/2}}{\Delta_{j+1/2}+\Delta_{j-1/2}}\bigg )^2\\&
\leq \sum_{K}\Delta_{j+1/2}\big ((u_{j,j+1/2})^2+(u_{j,j-1/2})^2\big )^2
\end{split}\label{projection:dg}
\end{equation}
\end{proposition}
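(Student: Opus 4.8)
The plan is to read the update for the interface value $\bbu_j^{n+1}$ as a convex combination of the two discontinuous Galerkin traces $u_j^{dg,+}$ (the value at $x_j$ obtained by running dG on $I_{j+1/2}$) and $u_j^{dg,-}$ (the value at $x_j$ obtained by running dG on $I_{j-1/2}$), and then to use nothing more than the convexity of $t\mapsto t^2$. First I would substitute
$$\bbu_j^{n+1}=\frac{\Delta_{j+1/2}\,u_j^{dg,+}+\Delta_{j-1/2}\,u_j^{dg,-}}{\Delta_{j+1/2}+\Delta_{j-1/2}}$$
into $\sum_j(\Delta_{j+1/2}+\Delta_{j-1/2})(\bbu_j^{n+1})^2$, which produces the equality in \eqref{projection:dg} with no work.

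Writing $\theta_j=\Delta_{j+1/2}/(\Delta_{j+1/2}+\Delta_{j-1/2})\in[0,1]$, so that $\bbu_j^{n+1}=\theta_j u_j^{dg,+}+(1-\theta_j)u_j^{dg,-}$, Jensen's inequality gives $(\bbu_j^{n+1})^2\le\theta_j(u_j^{dg,+})^2+(1-\theta_j)(u_j^{dg,-})^2$. Multiplying by $\Delta_{j+1/2}+\Delta_{j-1/2}$ clears the denominator and yields, term by term in $j$,
$$(\Delta_{j+1/2}+\Delta_{j-1/2})(\bbu_j^{n+1})^2\le\Delta_{j+1/2}(u_j^{dg,+})^2+\Delta_{j-1/2}(u_j^{dg,-})^2.$$
Then I would sum over $j\in\Z$ and reorganize the right-hand side by elements: in $\sum_j\big[\Delta_{j+1/2}(u_j^{dg,+})^2+\Delta_{j-1/2}(u_j^{dg,-})^2\big]$ each interval $K=I_{j+1/2}=[x_j,x_{j+1}]$ occurs exactly twice, once through node $x_j$ (contributing $\Delta_{j+1/2}$ times the square of the dG trace of $K$ at its left endpoint) and once through node $x_{j+1}$, after shifting the index in the second sum (contributing $\Delta_{j+1/2}$ times the square of the dG trace of $K$ at its right endpoint). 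Collecting the two contributions gives exactly $\sum_K\Delta_{j+1/2}\big((u_j^{dg,+})^2+(u_j^{dg,-})^2\big)$, the right-hand side of \eqref{projection:dg}.

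The estimate has no genuine analytic obstacle; the only points needing care are the index bookkeeping in the element-wise regrouping — keeping track of which dG trace belongs to which cell, and noting that the weights being positive and summing to one is precisely what makes the convex-combination step legitimate — together with correcting the evident typos in the displayed inequality (the term $\Delta_{j-1/2}^{dg,-}$ should read $\Delta_{j-1/2}u_j^{dg,-}$, and the outermost square on the last line is spurious). I would close by remarking that upgrading this to a full energy-stability statement for the scheme additionally requires an energy bound for the underlying dG step, which is not needed for the proposition as stated.
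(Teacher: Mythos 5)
Your proposal is correct and follows essentially the same route as the paper: both rest on the convexity inequality $(\alpha A+\beta B)^2\le \alpha A^2+\beta B^2$ for the convex combination defining $\bbu_j$, followed by a regrouping of the nodal sum into a sum over elements. If anything your version is slightly tighter and cleaner, since your element-wise regrouping is an exact identity where the paper inserts one more (unnecessary) inequality, and you rightly flag the typographical slips in the displayed statement.
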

\begin{proof}
    For any $A,B,\alpha,\beta\in \R$ with $\alpha, \beta\geq 0$ and $\alpha+\beta=1$, we have 
$$(\alpha A+\beta B)^2=
\alpha^2 A^2+\beta^2 B^2 +2\alpha\beta AB\leq \alpha^2 A^2+\beta^2 B^2+\alpha\beta (A^2+B^2)=\alpha A^2+\beta B^2$$
then setting 
$$\beta_{j}^+=\dfrac{\Delta_{j+1/2}}{\Delta_{j+1/2}+\Delta_{j-1/2}}, \alpha_{j}^{-}=\dfrac{\Delta_{j-1/2}}{\Delta_{j+1/2}+\Delta_{j-1/2}}, \gamma_{j}=\Delta_{j+1/2}+\Delta_{j-1/2}$$
\begin{equation*}
\begin{split}\sum_j \gamma_j u_j^2&=\sum_{j}\gamma_j\big (\alpha_{j}^-u_{j,j-1/2} + \beta_{j}^{+}u_{j,j+1/2}\big )^2\leq
\sum_j \gamma_j (\alpha_{j}^-) (u_{j,j-1/2})^2+(\beta_{j}^{+})(u_{j,j+1/2})^2)\\
&=\sum_{K=[x_j,x_{j+1}]} \bigg(\gamma_j\beta_{j}^-(u_{j,j+1/2})^2+\gamma_{j+1}\alpha_{j+1}^-(u_{j+1,j+1/2})^2\bigg )
\end{split}
\end{equation*}

Then,
\begin{equation*}
\begin{split}\sum_j \gamma_j u_j^2&=\sum_{j}\gamma_j\big (\alpha_{j}^-u_{j}^- + \beta_{j}^{+}u_{j}^+\big )^2\leq
\sum_j \gamma_j (\alpha_{j}^-) (u_{j}^-)^2+(\beta_{j}^{+})(u_{j}^+)^2)\\
&=\sum_{K} \bigg(\gamma_j\beta_{j}^-(u_j^+)^2+\gamma_{j+1}\alpha_{j+1}^-(u_{j+1}^+)^2\bigg )\\
&\leq \sum_K\Delta_{j+1/2}\bigg ( (u_j^+)^2+(u_{j+1}^+)^2\bigg )
\end{split}
\end{equation*}
because
$$\gamma_j\beta_{j}^+=\Delta_{j+1/2} \frac{\Delta_{j+1/2}}{\Delta_{j-1/2}+\Delta_{j+1/2}}\leq 
\Delta_{j+1/2} $$
and $$
\gamma_j\alpha_{j+1}^-= \Delta_{j+1/2}\dfrac{\Delta_{j+1/2}}{\Delta_{j+3/2}+\Delta_{j+1/2}}
\leq \Delta_{j+1/2}.$$
which is nothing more than \eqref{projection:dg} when $\bbu_j^\pm=\bbu_{j, j\pm 1/2}$, etc
\end{proof}

 This strategy, in 1D, is illustrated on figure \ref{fig:2} where the problem is that of the convection of $\cos(2\pi x)$ with constant speed. The plot shows on a $100$ points grid the result after $10$ and $100$ periods compared to the exact solution. We see very little dispersion. 
 \begin{figure}[h]
 \begin{center}
   \subfigure[]{ \includegraphics[width=0.45\textwidth]{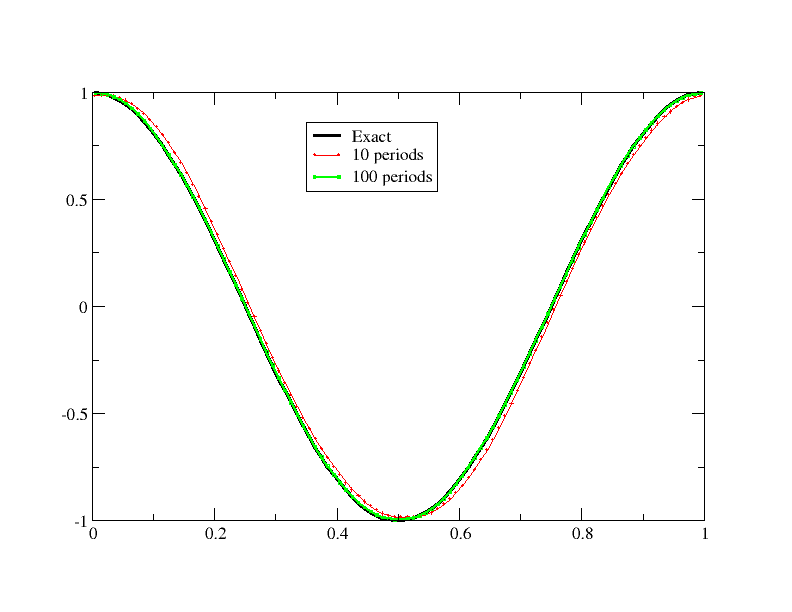}}\subfigure[]{\includegraphics[width=0.45\textwidth]{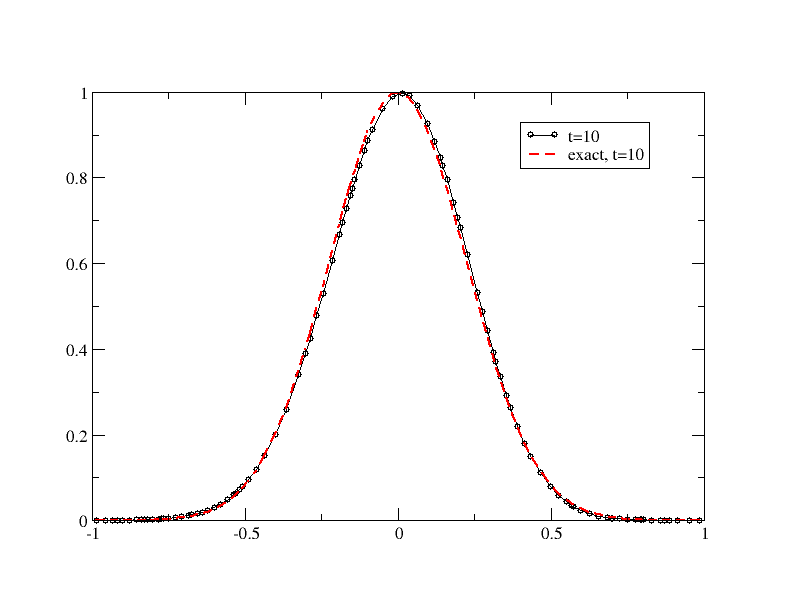}}
 \end{center}
 \caption{\label{fig:2} Solution of $u_t+u_x=0$ with periodic boundary conditions on $[0,1]$ for the initial condition $u_0=\cos(2\pi x)$ after $10$ and $100$ periods, compared to the exact solution, on regular mesh (fig. a). On fig. (b), the initial solution is $u_0=e^{-10x^2}$ on $[-1,1]$ after 10 rotations on a random mesh. }

 \end{figure}
  In two dimensions, the choice \eqref{projection} with weights defining an arithmetic average or an average weight by the area of the elements, as inspired by the 1D case,  seems to lead to unstable results, but the choice
  \begin{equation}
      \label{upwind:weights}
      \tilde{\omega}_{\sigma,K}=\bigg (\sum_{K, \sigma\in K} \big (\text{sign}(\nabla\bbf(\bbu_\sigma)\bullet\bbn_\sigma^K) +\varepsilon\Id_k\big )\bigg)^{-1}\; \bigg ( \text{sign}\big (\nabla\bbf(\bbu_\sigma)\bullet\bbn_\sigma^K\big )+\varepsilon \Id_k\bigg )
  \end{equation}
  where, for $\mathbf{A}=(A,B)$, $A,B\in M_k(\R)$ and $\mathbf{n}=(n_x,n_y)$, $\mathbf{A}\bullet\bbn=An_x+Bn_y$ and $\varepsilon$ is a small positive parameter $\approx 10^{-20}$ to avoid inversion problems\footnote{This can occur for example in the case where $\bbf(\bbu)=\bba\bbu$, $\sigma$ is the mid point of an edge parallel to $\bba$. In that case up-winding makes no sense and the weight $\tilde{\omega}_\bsigma$ is equal to $\tfrac{1}{2}$.}. {This choice was made in  \cite{Abgrall2023b}, and then in   \cite{BP_Pampa_VEM} in the case of polygons.}
  
  {This is illustrated by the following results, where, in $\Omega=[-20,20]$,
  $$\bbu_0(\bbx)=\exp(-\alpha \Vert \bbx-\bbx_0\Vert^2),  \alpha=0.25$$
  on the problem 
  \begin{equation}
  \label{eq:rotation}\dpar{\bbu}{t}+\text{ div }\bbf(\bbu)=0
  \end{equation}
  with
  \begin{subequations}\label{flux}
  \begin{equation}
  \label{flux:translation}
  \bbf(u)=\bba u, \quad \bba(=-1,-1), \quad \bbx_0=(15,15)
  \end{equation}
  and final time $T=30$
  or
  \begin{equation}
  \label{flux:rotation}
    \bbf(\bbx, \bbu)=2\pi (y,-x)u, \bbx=(x,y), \quad \bbx_0=(-10,0)
    \end{equation}\end{subequations} The final time is $T=1$, i.e. one full rotation.

The tables \ref{table:transport} and  \ref{table:rotation} show the error  for the scheme with upwind weights for \eqref{eq:rotation} with the flux \eqref{flux}.  The CFL is set to $0.3$.
\begin{table}[!h]
\begin{center}
\begin{tabular}{|c||cc||cc||cc|}\hline
\multicolumn{7}{|c|}{Average values}\\
\hline
$h$                      &$L^1$              & slope   &$L^2$                   & slope  &$L^\infty$ & slope \\ 
\hline
 $0.4100$ & $0.4412\;10^{-3} $ &    -                   & $0.4412\;10^{-3}$ &    -                   & $ 0.4901\;10^{-1}$&    -                   \\
 $0.2975$ & $0.1958\;10^{-3}$ & $ 2.533$ & $ 0.1224\;10^{-2}$ & $ 2.443$ & $ 0.2313\;10^{-1}$ & $ 2.341$\\
 $0.2442$ & $0.1009\;10^{-3} $ & $3.361$ & $ 0.6420\;10^{-3}$ & $ 3.274$ & $ 0.1233\;10^{-1}$ & $ 3.189$\\
 $0.2083$ & $0.5819\;10^{-4}$ & $ 3.460$ & $ 0.3732\;10^{-3}$ & $ 3.409$ & $ 0.7148\;10^{-2}$ & $ 3.429$\\
 $0.1533$ & $0.2073\;10^{-4}$ & $ 3.364$ & $ 0.1347\;10^{-3}$ & $ 3.321$ & $ 0.2621\;10^{-2}$ & $ 3.270$\\
      \hline \hline \multicolumn{7}{|c|}{Point values}\\
      \hline
      $h$                &$L^1$              & slope     &$L^2$                & slope &$L^\infty$                  & slope \\ 
\hline
 $0.4100$ & $ 0.2915\;10^{-3}$ &    -                   & $ 0.2915\;10^{-3}$ &    -                   & $  0.5018\;10^{-1}$&    -                   \\
 $0.2975$ & $ 0.1295\;10^{-3}$ & $ 2.528$ & $ 0.1006\;10^{-2}$ & $ 2.456$ & $ 0.2338\;10^{-1}$ & $ 2.381$\\
 $0.2442$ & $ 0.6689\;10^{-4}$ & $ 3.351$ & $ 0.5262\;10^{-3}$ & $ 3.285$ & $ 0.1238\;10^{-1}$ & $ 3.222$\\
 $0.2083$ & $ 0.3863\;10^{-4}$ & $ 3.451$ & $ 0.3056\;10^{-3}$ & $ 3.416$ & $ 0.7208\;10^{-2}$ & $ 3.401$\\
 $0.1533$ & $ 0.1379\;10^{-4}$ & $ 3.357$ & $ 0.1102\;10^{-3}$ & $ 3.325$ & $ 0.2633\;10^{-2}$ & $ 3.284$\\
      \hline
      \end{tabular}
      \end{center}
      \caption{\label{table:transport}Errors for the average and point values, triangular mesh, translation problem \eqref{eq:rotation}-\eqref{flux:translation} $T=30$.}
   \end{table}

\begin{table}[!h]
\begin{center}
\begin{tabular}{|c||cc||cc||cc|}\hline
\multicolumn{7}{|c|}{Average values}\\
\hline
$h$                      &$L^1$              & slope   &$L^2$                   & slope  &$L^\infty$ & slope \\ 
\hline
$0.4100$ & $ 0.4575\,10^{-3}$ &      -                   &$ 0.4575\,10^{-3}$     & -                      &$ 0.7373\,10^{-1}$&- \\
 $0.2975$ & $0.3017\,10^{-3 }$ & $1.297$ & $ 0.1883\,10^{-2} $ & $2.403 $ & $0.3568\,10^{-1} $ & $2.262$\\
$ 0.2442$ & $ 0.1551\,10^{-3 }$ & $3.376$ & $ 0.9853\,10^{-3} $ & $3.284$ & $0.1900\,10^{-1}$ & $ 3.197$\\
$ 0.2083$ & $ 0.8885\,10^{-4}$ & $ 3.500 $ & $0.5702\,10^{-3} $ & $3.437$  & $0.1118\,10^{-1} $ & $3.334$\\
$ 0.1533$ & $ 0.3156\,10^{-4}$ & $3.375 $ & $0.2042\,10^{-3} $ & $3.349$  & $0.4074\,10^{-2} $ & $3.290$\\
      \hline \hline \multicolumn{7}{|c|}{Point values}\\
      \hline
      $h$                &$L^1$              & slope     &$L^2$                & slope &$L^\infty$                  & slope \\ 
\hline
$0.4100 $ & $0.6794\,10^{-3} $ &     -                   &$0.6794\,10^{-3}$ &    -                   & $ 0.7538\,10^{-1}$&    -                   \\
 $0.2975 $ & $0.2034\,10^{-3}$ & $ 3.760$ & $ 0.1549\,10^{-2} $ & $2.436 $ & $0.3596\,10^{-1} $ & $2.307$\\
$ 0.2442 $ & $0.1041\,10^{-3 }$ & $3.395 $ & $0.8083\,10^{-3 }$ & $3.298 $ & $0.1922\,10^{-1} $ & $3.178$\\
$ 0.2083$ & $ 0.5968\,10^{-4 }$ & $3.498 $ & $0.4671\,10^{-3 }$ & $3.446 $ & $0.1125\,10^{-1}$ & $ 3.363$\\
$ 0.1533$ & $ 0.2115\,10^{-4 }$ & $3.382 $ & $0.2042\,10^{-3} $ & $2.699 $ & $0.4084\,10^{-2} $ & $3.305$\\
      \hline
      \end{tabular}
      \end{center}
      \caption{\label{table:rotation}Errors for the average and point values, triangular mesh, rotation problem \eqref{eq:rotation}-\eqref{flux:rotation}, $T=1$.}
   \end{table}
This shows that the scheme delivers the expected error, even a little bit more. This is likely be a coincidence.

}

 \subsection{Generatisation to non constant advection speeds and non linear problems.}
{In \cite{Abgrall2023b} and \cite{BP_Pampa_VEM}, the update of the average is done  by
$$\vert K\vert \dfrac{\mathrm d\bar u_K}{\mathrm dt}+\int_{\partial K}\bbf(u)\cdot \bbn \; \mathrm d\gamma=0,$$
while the update of the point values is done by
$$\dfrac{\mathrm d u_{\sigma}}{\mathrm dt}+\sum_{K, \sigma\in K}\omega_{\bsigma, K} \mathbf{J}(u_\bsigma)\nabla u_{\vert K}(\bsigma)=0$$
where $\mathbf{J}$ is the Jacobian of the flux with respect to the conservative variables and 
$\omega_{\bsigma, K} $ is obtained by a generalisation of \eqref{upwind:weights}, see \cite{Abgrall2023b,BP_Pampa_VEM} for details.
The main difference with the linear convection problem with constant speed is that $\text{div }\bbf(u)$ is not an element of $V$, so that the equivalence is lost.
Nevertheless, one can extend what we have already written in that case, it will be a different scheme that needs to be studied. This interpretation of the \pampa scheme allows in particular to understand how to discretise boundary conditions, see \cite{PampaDG} for such a study that is above the present contribution. }

\section{Intrinsic positivity properties of \pampa}
{In all the examples described bellow, as well as in \cite{BP_Pampa_VEM}, the solution of \eqref{eq:hyperbolic} is approximated by
$$\bbu_{\mathrm h}=\sum_{\sigma\in \partial K} \bbu_\sigma \varphi_\sigma+\bar \bbu_K \bar \varphi_K$$
where
\begin{itemize}
\item $\bar\varphi_K=0$ on $\partial K$, and $\int_K \bar\varphi_K\;\mathrm d\bbx=|K|$,
\item $\varphi_\sigma(\sigma')=\delta_\sigma^{\sigma'}$, and for all $\sigma$,
$$\int_K\varphi_\sigma\; \mathrm d\bbx=0$$
\end{itemize}
We will call $\bar\varphi$ a bubble function, and in each cases, it admits a maximum that we will denote by $\bbx^\star$. This maximum is in the interior of the polygon. }

In this section, we show the following result:
\begin{proposition}\label{prop}
    Let $K$ be an element or a polygon. For the 1D third order scheme, the 2D quadratic, cubic, cubic moment and the scheme of \cite{BP_Pampa_VEM}, we have the following property: There exists $c_0>0$ depending only on $K$ with the following property: If $\bbu^n_\sigma\in \mathcal{D}$ and $\bbu(\bbx^\star)\in \mathcal{D}$ then 
    $$\bar\bbu^{n+1}_K=\bar\bbu^n_K-\frac{\Delta t}{\vert K\vert}\oint_{\partial K}\bbf(\bbu_{\mathrm h})\cdot \bbn\; \mathrm d\gamma$$ satisfies $\bar\bbu^{n+1}_K\in \mathcal{D}$ if
    $$\Delta t\max_{\bbx\in K}\rho(\nabla \bbf(\bbu_{\mathrm h}(\bbx)))\leq c_0\frac{\vert K\vert}{\vert\partial K\vert}.$$
\end{proposition}
In order to establish this property, we proceed in 2 steps. First we show the following lemma
\begin{lemma}\label{lemma:simpson}
    For the functional approximations mentioned in proposition \ref{prop},  there exists $\bbx^\star\in \text{interior of } K$ such that
$$\bbu_{\mathrm h}(\bbx^\star)=\sum_{\sigma\in \partial K} \alpha_\sigma^K \bbu_\sigma+\omega_K\bar u_K$$ with 
$$\omega_K>0 \text{ and } \alpha_\sigma^K<0.$$
\end{lemma}
In a second step, we use this property, which can be seen as a generalization of Simpson's formula, to established proposition \ref{prop}.

{In general we will have $\bbu(\bbx^\star), \bbu_\sigma\in \mathcal{D}$ will imply that $\bar u_K\in \mathcal{D}$. But the converse is wrong. For example
$u=6x(1-x)+7x(3x-2)$ is such that $u(0)=0$, $u(1)=7$ and $\bar u=1$, but $u(1/2)=-1/4<0$. This means that when $u(\bbx^\star)\not\in \mathcal D$, something else must be done. One possibility is described in \cite{BP_Pampa_VEM} where a monolitic convex limiting is used. Another possibility, getting inspiration from \dg is to "limit" $\bbu$ in $K$ while keeping the average. Such  a solution is described in \cite{Pampa1D}, the price to pay is to use numerical flux to update the average value. What we conjecture is that a result of the type described in Lemma \ref{lemma:simpson} is always true. 
}

\subsection{The 1D case.}
We begin with the 1D case, repeating \cite{Pampa1D}. The computational domain is covered by a set of non-overlapping cells denoted by $I_\jph=[x_j,x_{j+1}]$ centered at $x_\jph$. The solution given in terms of boundary DoFs $u_j^n\approx u(x_j,t^n)$ and internal DoFs $\xbar u_\jph\approx\int_{I_\jph}u(x)\;\mathrm dx/\dx_{j+1/2}$ are assumed to be available. For each cell $I_\jph$, the internal DoF is evolved with
\begin{equation}\label{eq:IDoF}
    \xbar u_{j+1/2}^{n+1}=\xbar u_{j+1/2}^n-\lambda\big (f(u_{j+1})-f(u_j)\big )
\end{equation}
where $\lambda=\frac{\dt}{\dx_{j+1/2}}$ with $\dx$ being the measure of the cell $I_\jph$ and $\dt$ the adaptive time step depends on a certain CFL condition. We consider the third-order of accuracy here and thus can apply the Simpson's rule to write
\begin{equation*}
    \xbar u_{j+1/2}^n=\frac{1}{6}\big ( u_j^n+4 u_{j+1/2}^n+u_{j+1}^n\big ),
\end{equation*}
so that \eqref{eq:IDoF} becomes
\begin{equation*}
\begin{split}
\xbar u_{j+1/2}^{n+1}&=\frac{1}{6}\bigg (u_{j+1}^n-6\lambda\big (f(u_{j+1})-\widehat f(u_{j+1}, u_{j+1/2})\big )\bigg )\\
&+\frac{4}{6}\bigg ( {u}_{j+1/2}^n-\frac{6}{4}\lambda\big (\widehat f(u_{j+1},{u}_{j+1/2})-\widehat f({u}_{j+1/2}, u_j)\big )\bigg )\\
&+\frac{1}{6}\bigg (u_{j}^n-6\lambda\big (\widehat f({u}_{j+1/2},u_j)-f(u_j)\big )\bigg )\\
\end{split}
\end{equation*}
Then, we look at the intervals:
in the cell $[x_{j-1}, x_j]$ we have 3 constant cells with value $u_{j-1}$, $ u_{j-1/2}$, $u_j$ and in the cell $[x_j,x_{j+1]}]$, we have 3 constant cells with values
$u_j$, $u_{j+1/2}$, $u_{j+1}$, so that we can interpret the term $f(u_j)$ as the flux $\widehat f(u_{j},u_j)$ between the most right subcell of $[x_{j-1},x_{j}]$ and the most left subcell of $[x_j,x_{j+1}]$. This shows that if $6\lambda\leq c_0$ the CFL constant for the monotone flux $\widehat f$, if $u_{j+1/2}$ is in the bounds $[m,M]$ as well as the other terms, then $\bar u_{j+1/2}^{n+1}\in [m, M]$.
{ In the one dimensional cubic case, the bubble function is $6x(1-x)$ and it reaches its maximum for $x=\tfrac{1}{2}$.}

The key points are 
\begin{itemize}
\item The quadrature formula with positive weights,
\item The continuity of the approximation at the boundaries of the cell,
\end{itemize}

\subsection{The 2D case.}
If we can find a point such that
$$u(\bbx^\star)=\sum_{\text{Lagrange points}}\omega_{\sigma} u(\bbx_\sigma)+\omega_K \bar u_K$$
with $\omega_K>0$ and $\omega_{\sigma}<0$, we can repeat the same argument.

\subsubsection{Polynomial case: quadratic and cubic cases of \cite{Abgrall2023b}.}

For quadratic approximation, we have
$$u_{\mathrm h}(\bbx)=\sum_\sigma u(\bbx_\sigma)\varphi_\sigma(\bbx)+\bar u_K \bar\varphi_K(\bbx)$$
with
$$\bar\varphi_K=60\lambda_1\lambda_2\lambda_3$$
$$\varphi_{\sigma_i}=(2\lambda_i-1)\lambda_i, \quad i=1,2,3$$
and 
$$\varphi_{\sigma_4}=4\lambda_1\lambda_2-\frac{1}{3}\bar\varphi_K,\quad \varphi_{\sigma_5}=4\lambda_2\lambda_3-\frac{1}{3}\bar\varphi_K,\quad \varphi_{\sigma_6}=4\lambda_3\lambda_1-\frac{1}{3}\bar\varphi_K,$$
and we see that for the centroid $\bbx_K$,
$$\bar\varphi_K(\bbx_K)=\frac{60}{27}, \quad \varphi_{\sigma_i}=-\frac{1}{9},\quad i=1,2,3; \quad \varphi_{\sigma_i}(\bbx_K)=\frac{4}{9}-\frac{20}{27}=-\frac{8}{27},\quad i=4,5,6$$
i.e.
$$u(\bbx_K)=\frac{20}{9}\bar u_K-\frac{1}{9}\sum_{i=1}^3 u_{\sigma_i}-\frac{8}{27}\sum_{i=4}^6u_{\sigma_i}$$
from which we get
$$\bar u_K=\frac{9}{20}u(\bbx_K)+\frac{1}{20}\sum_{i=1}^3 u_{\sigma_i}+\frac{2}{15}\sum_{i=4}^6u_{\sigma_i}.$$

\medskip

For cubic approximation, we have 
$$\varphi_{\sigma_i}=\frac{1}{2}\lambda_i(3\lambda_i-1)(3\lambda_i-2)-\frac{60}{30}\lambda_1\lambda_2\lambda_3,\quad  i=1,2,3$$
and
\begin{equation*}
    \begin{split}
       \varphi_{\sigma_4}=\frac{9}{2}\lambda_1\lambda_2(3\lambda_1-1)-\frac{9}{2}\lambda_1\lambda_2\lambda_3, \quad \varphi_{\sigma_5}=\frac{9}{2}\lambda_1\lambda_2(3\lambda_2-1)-\frac{9}{2}\lambda_1\lambda_2\lambda_3,\\
       \varphi_{\sigma_6}=\frac{9}{2}\lambda_2\lambda_3(3\lambda_2-1)-\frac{9}{2}\lambda_1\lambda_2\lambda_3, \quad \varphi_{\sigma_7}=\frac{9}{2}\lambda_2\lambda_3(3\lambda_3-1)-\frac{9}{2}\lambda_1\lambda_2\lambda_3,\\
    \varphi_{\sigma_8}=\frac{9}{2}\lambda_3\lambda_1(3\lambda_3-1)-\frac{9}{2}\lambda_1\lambda_2\lambda_3, \quad \varphi_{\sigma_9}=\frac{9}{2}\lambda_3\lambda_1(3\lambda_1-1)-\frac{9}{2}\lambda_1\lambda_2\lambda_3,
    \end{split}
\end{equation*}
and we see that
$$u(\bbx_K)=-\sum_{j=1}^3 \frac{2}{27}u(\sigma_i)-\sum_{i=4}^9\frac{1}{6}u(\sigma_i)+\frac{60}{27}\bar u_K.$$
From this we get
$$\bar u_K=\frac{9}{20}u(\bbx_K)+\frac{1}{30}\sum_{j=1}^3 u(\sigma_i)+\frac{9}{120}
\sum_{i=4}^9u(\sigma_i).$$

We note that
$$\frac{9}{20}+\frac{3}{20}+\frac{6}{15}=1$$
and 
$$\frac{9}{20}+\frac{3}{30}+\frac{54}{120}=1.$$
\subsubsection{Polynomial case: cubic moment.}
    We write
    $$u_{\mathrm h}(\bbx)=\sum_{\bsigma\in \partial K}u_\bsigma\varphi_\bsigma(\bbx)+\sum_{\bmu}m_\bmu(u)\bar\varphi_\bmu(\bbx)$$
    and we evaluate this at the centroid.
    Because the coefficients defining the $\bar\varphi_\bmu$ are obtained by cyclic permutation, from lemma \ref{lemma:dual_basis}, we see that
    $$\varphi_\bmu(\frac{1}{3},\frac{1}{3}, \frac{1}{3})=\frac{1}{3^P}\sum_{\bnu} a_{\bmu+\bnu}$$ where $P$ only depends on the degree ($P=3+(k-2)$). This relation shows that 
    $$\omega_K=\varphi_\bmu(\frac{1}{3},\frac{1}{3}, \frac{1}{3})$$ does not depend on $\bmu$ and, because the sum of the moments is the average,
    we obtain
    $$u_{\mathrm h}\left(\frac{1}{3},\frac{1}{3}, \frac{1}{3}\right)=\sum_\bsigma u_\bsigma\varphi_\bsigma\left(\frac{1}{3},\frac{1}{3}, \frac{1}{3}\right)+\omega_K \xbar \bbu_K$$
    The only thing to check is if $\omega_K>0$ and 
    $$\varphi_\bsigma \left(\frac{1}{3},\frac{1}{3}, \frac{1}{3} \right)<0.$$

    In the cubic case $(k=3)$, $P=4$ and 
    $$\omega_K=\frac{1}{3^4}\big ( \frac{1800-720-720}{7}\big )=\frac{360}{7\times 3^4}=\frac{360}{567}>0$$
    and $\varphi_{\bsigma}(\frac{1}{3},\frac{1}{3}, \frac{1}{3})=P_\bsigma(\frac{1}{3},\frac{1}{3}, \frac{1}{3})-\sum_\bmu\frac{m_\bmu(P_\bsigma)}{m_\bmu(\varphi_\bmu)}\varphi_\bmu(\frac{1}{3},\frac{1}{3},\frac{1}{3})$.
    The Gauss--Lobatto points in $[0,1]$ are $\{\alpha_0=0,\alpha_1=\tfrac{\sqrt{5}-1}{2\sqrt{5}}, \alpha_2=\tfrac{\sqrt{5}+1}{2\sqrt{5}}, \alpha_3=1\}$. The Lagrange polynomials are (the interpolation points are given by their barycentric coordinates)
    \begin{itemize}
        \item for $(1,0,0)$, 
        $$P_0=\dfrac{\lambda_1(\lambda_1-\alpha_1)(\lambda_1-\alpha_2)}{(1-\alpha_1)(1-\alpha_2)}$$ and its value at the centroid is
        $$-\frac{1}{27}<0.$$
        \item For $(\alpha_2,\alpha_1,0)$, it is 
        $$P_1=\frac{\lambda_1\lambda_2(\lambda_1-\alpha_1)}{\alpha_2\alpha_1(\alpha_2-\alpha_1)}$$
        and its value at the centroid is 
        $$\frac{5}{18}-\frac{5\sqrt{5}}{54}>0$$
        \item for $(\alpha_1,\alpha_2,0)$ it is
        $$P_2=\frac{\lambda_1\lambda_2(\lambda_1-\alpha_2)}{\alpha_1\alpha_2(\alpha_1-\alpha_2)}$$ and its value at the centroid is
        $$\frac{5}{18} + \frac{5\sqrt{5}}{54}>0$$
    \item for $(0,1,0)$, 
        $$P_3=\dfrac{\lambda_2(\lambda_2-\alpha_1)(\lambda_2-\alpha_2)}{(1-\alpha_1)(1-\alpha_2)}$$ and its value at the centroid is
        $$-\frac{1}{27}<0.$$    
    \end{itemize}
    Next, we know the value $\varphi_\bmu(\frac{1}{3},\frac{1}{3},\frac{1}{3})=\frac{360}{567}$ from lemma \ref{lemma:dual_basis} and the form of $\varphi_{\bmu}$. Moreover, we can compute $m_\bmu(\varphi_\bmu)$ and $m_\bmu(P_\bsigma)$. Finally, $\varphi_\bsigma(\frac{1}{3},\frac{1}{3},\frac{1}{3})<0$ is verified.
    So again we have the positivity.
\subsubsection{Approximation using Virtual finite element (VEM) approximation}
{
In \cite{BP_Pampa_VEM}, \remi{we have  }extended the method developed in \cite{Abgrall2023b}. The computational domain is covered by a family of non overlapping polygon denoted by $K$, $\Omega=\cup K$.
In each polygon, the solution is approximated by an element of 
$$V_k(K)=\{ v\text{ such that } v_{\mid\partial K}\in \P^k(\partial K) \text{ and }\Delta v\in \P^{k-2}(K)\},$$
where $k\geq 2$. In $\Omega$, the solution will be approximated in 
$$V_k(\Omega)=\bigg(\bigoplus_K V_k(K)\bigg )\cap C^0(\Omega).$$

This kind of approximation was introduced in \cite{brezzi,hitch,vem} in a variational framework. This is an extension of the classical finite element techniques, where the ``elements'' are no longer simplex but general polygons with very mild assumptions on the polygons (essentially that they are star shaped with respect to one point). Hence there is no longer a reference element, so that basis functions must be designed for each polygons. This is theoretically possible, at least analytically, but very cumbersome. Hence the idea behind VEM is to avoid to explicitly use basis functions. We sketch the framework, some more details are given in the appendix, the interested reader is suggested to study \cite{brezzi} for a review.
The degrees of freedom are the Gauss-Lobato points on each edge of $K$ and the moments
$$m_\bmu(u)=\frac{1}{\vert K\vert}\int_K\bigg (\frac{\bbx-\bbx^\star}{h_K}\bigg )^\bmu u(\bbx)\; \mathrm d\bbx, \qquad \vert \bmu\vert \leq k-2$$ where we have introduced the following notations:
\begin{itemize}
    \item $\bbx^\star$ is a point toward which $K$ is star-shaped,
    \item $\bmu=(\mu_1, \ldots, \mu_d)$ is a multi-index, $\vert \bmu\vert=\sum_{i=1}^d\mu_i$.
    \item If $\bby=(y_1, \ldots , y_d)$,
    $$y^\bmu=\Pi_{i=1}^d y_i^{\mu_i}.$$
    \item $\sigma$ is any of the Gauss-Lobatto points.
    \item In the following $\pi$ is the $L^2$ projector defined on $V_k(K)$ onto $\P^k(K)$, see \cite{hitch,vem}. It is computable solely with the given DoFs, and there is no need to know the basis functions.
    \item We use the following notations for the ``basis'' functions: for each DoF on the boundary, we call $\varphi_\sigma$ the element of $V_k(K)$ such that 
    $$\varphi_\sigma(\sigma')=\delta_\sigma^{\sigma'}, m_\bmu(\varphi_\sigma)=0, \quad \forall \bmu, \vert\bmu\vert\leq k-2$$ and by $\varphi_\bmu$  the element of $V_k(K)$ such that
    $$\forall \sigma, \varphi_\bmu(\sigma)=0\text{ and }
    m_{\bmu'}(\varphi_\bmu)=\delta_{\bmu}^{\bmu'}.$$
\end{itemize}

}
The question we want to address is that of the existence of generalised Simpson rules for polygons which are not assumed to \remi{ be }convex but assumed to be star shaped with respect to one point in $K$.
This case is a bit more involved, and we consider the quadratic case only.
Again we have on the polygon $K$
$$\uh(\bbx)=\sum_{\sigma\in \partial K}u(\bbx_\sigma)\varphi_\sigma(\bbx_\sigma)+\bar u_K\bar\varphi_K(\bbx)$$
where the ``basis'' functions $\varphi$ are such that:
\begin{itemize}
\item $\Delta \varphi_\sigma=\alpha_\sigma\in \R$, $\Delta  \bar\varphi_K=\alpha_K\in \R$,
\item $\bar\varphi_K=0$ on $\partial K$ and $\int_K\bar\varphi_K\;\mathrm d\bbx=\vert K\vert$,
\item $\varphi_\sigma(\sigma')=\delta_\sigma^{\sigma'}$, $\varphi_\sigma\in \P^2(\partial K)$ and $\int_K\varphi_\sigma\;\mathrm d\bbx=0$.
\end{itemize}
The idea is to find a point $\bbx^\star\in \overset{\circ}{\Omega}$  such that 
$$u(\bbx^\star)=\sum_{\sigma\in \partial \Omega} u_\sigma \varphi_\sigma(\bbx^\star)+\bar u\bar\varphi_K(\bbx^\star)$$ where
$$\bar\varphi_K(\bbx^\star)>0\text{ and } \bar\varphi_\sigma(\bbx^\star)<0 \quad \forall \sigma\in \partial K.$$

\bigskip

It is easy to show that $\bar\varphi_K\geq 0$ on $K$. First $\alpha_K<0$ because
$$\alpha_K\vert K\vert=\int_K\bar\varphi_K\Delta \bar\varphi_K\;\mathrm d\bbx=-\int_K \nabla \bar\varphi_K^2\;\mathrm d\bbx+\int_{\partial K}\bar\varphi_K\nabla\bar\varphi_K\cdot \bbn\;\mathrm d\gamma=-\int_K \nabla \bar\varphi_K^2\;\mathrm d\bbx<0$$
and then $\Delta \bar\varphi_K=\alpha_K<0$, so that the maximum principle shows that $\varphi\geq 0$ on $K$. If $\bbx$ is in the interior of $K$, the same maximum principle (more precisely the mean value theorem) shows that $\bar\varphi_K(\bbx)>0$.

We call $\bbx^\star$ a point for which $\max\limits_{\bbx\in \Omega}\bar\varphi_K(\bbx)=\bar\varphi_K(\bbx^\star)$. We have $\bbx^\star\in \overset{\circ}{\Omega}$ and $\bar\varphi_K(\bbx^\star)>1$ because, since
$$\vert K\vert \; \bar\varphi_K(\bbx^\star)\geq \int_K\bar\varphi_K(\bbx)\; \mathrm d\bbx=\vert K\vert,$$
we see that $\bar\varphi_K(\bbx^\star)\geq 1$ and if $\bar\varphi_K(\bbx^\star)= 1$, we would have
$$0=\int_K \bar\varphi_K(\bbx)\; \mathrm d\bbx-\vert K\vert =\int_K\big ( \bar\varphi_K(\bbx)-1\big )\; \mathrm d\bbx$$
with $\bar\varphi_K(\bbx)-1\leq 0$, so $\bar\varphi_K(\bbx)-1=0$ and this is not possible.

Next we show that  $\alpha_\sigma>0$. Let $\bby_\sigma$ a point where $\varphi_\sigma$ reaches its minimum. It  must be such that $\varphi_\sigma(\bby_\sigma)<0$ because 
$$\int_K\varphi_\sigma(\bbx)\; \mathrm d\bbx=0.$$

We have two cases to look at:
 \begin{itemize}
\item If $\sigma$ is a midpoint, then $\varphi_\sigma\geq 0$ on the boundary. If $\alpha_\sigma\leq 0$, then from the maximum principle, $\varphi_\sigma\geq 0$ on $\Omega$ and then we cannot have 
$$\int_K\varphi_\sigma(\bbx)\; \mathrm d\bbx=0.$$
So $\alpha_\sigma>0$.
\item If $\sigma$ is a vertex: Assume that $\alpha_\sigma<0$.

If $\Delta \varphi_\sigma=\alpha_\sigma<0$, then we take $u=\varphi_\sigma+\theta\bar\varphi_K$ with $\theta>0$.
If the minimum of $\varphi_\sigma$ is reached on the boundary, call it $\bby^\star$:
$\min_\Omega\varphi_\sigma=\min_{\partial \Omega}\varphi_\sigma=\varphi_\sigma(\bby^\star)$. We have, because $\bar\varphi_K\geq 0$, $u\geq \varphi_\sigma$, so $\min_{\Omega} u\geq \min_\Omega \varphi_\sigma$. If the minimum of $u$ is reached on $\partial\Omega$
\begin{equation*}
    \begin{split}
\varphi_\sigma(\bby^\star)=\min_\Omega \varphi_\sigma &
=\min_{\partial\Omega} \varphi_\sigma=\min_{\partial\Omega }(\varphi_\sigma+\theta\bar\varphi_K)\geq \min_\Omega(\varphi_\sigma+\theta\bar\varphi_K)\\&\qquad = \varphi_\sigma(\bby^\star_\theta)+\theta\bar\varphi_K(\bby^\star_\theta)
 >\varphi_\sigma(\bby^\star_\theta),
\end{split}
\end{equation*}
 so this is absurd.
Hence, we must have $\bby^\star\not\in \partial\Omega$, so since it is a strict minimum, $\Delta \varphi_\sigma(\bby^\star)=\alpha_\sigma>0$.
\end{itemize}

\medskip
We want to show that $\varphi_\sigma(\bbx^\star)<0$.

We take $\varepsilon>0$ and consider $\bar\varphi_\varepsilon=\log(\bar\varphi_K+\varepsilon)$. It is well defined, and we have
$$\nabla\bar\varphi_\varepsilon=\frac{\nabla\bar\varphi_K}{\bar\varphi_K+\varepsilon}, \nabla^2\bar\varphi_\varepsilon=-\frac{\nabla\bar\varphi_K\otimes\nabla\bar\varphi_K}{(\bar\varphi_K+\varepsilon)^2}+\frac{\nabla^2\bar\varphi_K}{\bar\varphi_K+\varepsilon}, $$ so that
$$\Delta \bar\varphi_\varepsilon=-\frac{\Vert \nabla\bar\varphi_K\Vert^2}{(\bar\varphi_K+\varepsilon)^2}+\frac{\Delta \bar\varphi_K}{\bar\varphi_K+\varepsilon}\leq 0.$$
Then we consider $u=\varphi_\sigma+\theta_\varepsilon\bar\varphi_\varepsilon$ for $\theta_\varepsilon\geq 0$.
It is clear that for $\varepsilon$ small enough, $u\leq 0$ on $\partial\Omega$ if $1+\theta_\varepsilon\log \varepsilon\leq 0$
because $\varphi_\sigma\leq \max\limits_{\bbx\in \Omega}\varphi_\sigma=1$. So we look for $\epsilon_0$ such that if $\epsilon\leq \epsilon_0$,
$$\Delta\varphi_\sigma+\theta_\varepsilon\Delta\bar\varphi_\varepsilon\leq 0$$
We have
$$
\Delta\varphi_\sigma+\theta_\varepsilon\Delta\bar\varphi_\varepsilon=\underbrace{\alpha_\sigma}_{\geq 0}-\theta_\varepsilon \bigg (\underbrace{\frac{\Vert \nabla\bar\varphi_K\Vert^2}{(\bar\varphi_K+\varepsilon)^2}+\frac{\vert\alpha_P\vert}{\bar\varphi_K+\varepsilon}}_{\geq 0}\bigg)\leq 0$$
if 

$$\theta_\varepsilon\bigg ( \frac{\Vert \nabla\bar\varphi_K\Vert^2}{(\bar\varphi_K+\varepsilon)^2}+\frac{\vert\alpha_P\vert}{\bar\varphi_K+\varepsilon}\bigg )\geq {\alpha_\sigma}$$
so we need
$$\theta_\varepsilon\bigg ( \frac{\min_\Omega\Vert \nabla\bar\varphi_K\Vert^2}{(\bar\varphi_K(\bbx^\star)+\varepsilon)^2}+\frac{\vert\alpha_P\vert}{\bar\varphi_K(\bbx^\star)+\varepsilon}\bigg )\geq {\alpha_\sigma}$$
Together with the condition on the boundary, we need:
so we need
$$
\theta_\varepsilon\bigg ( \dfrac{\min_\Omega\Vert \nabla\bar\varphi_K\Vert^2}{(\bar\varphi_K(\bbx^\star)+\varepsilon)^2}+\dfrac{\vert\alpha_P\vert}{\bar\varphi_K(\bbx^\star)+\varepsilon}\bigg )\geq {\alpha_\sigma} \text{ and }
\theta_\varepsilon\log\varepsilon\leq -1
$$
i.e
\begin{equation}\label{eq:1}
\theta_\varepsilon\bigg ( \dfrac{\min_\Omega\Vert \nabla\bar\varphi_K\Vert^2}{(\bar\varphi_K(\bbx^\star)+\varepsilon)^2}+\dfrac{\vert\alpha_P\vert}{\bar\varphi_K(\bbx^\star)+\varepsilon}\bigg )\geq {\alpha_\sigma} \text{ and }
\theta_\varepsilon\log\big (\frac{1}{\varepsilon}\big )\geq 1
\end{equation}

Last we want that
$$\varphi_\sigma(\bbx^\star)\leq -\theta_\varepsilon\log\big(\bar\varphi_K(\bbx^\star)+\varepsilon)$$
with 
\begin{equation}\label{eq:2}\bar\varphi_K(\bbx^\star)+\varepsilon>1
\end{equation} so with $\varepsilon$ small enough.
So we first choose $\varepsilon$ to meet \eqref{eq:2} and then we choose $\theta>0$ so that \eqref{eq:1} is met. This shows that $\varphi_\sigma(\bbx^\star)<0$.

\bigskip
All in all, we have
$$u(\bbx^\star)=\omega_K\bar u_K+\sum_{\sigma\in \partial K}\omega_i u_{\sigma_i},$$
so that
$$\bar u_K=\frac{1}{\omega_K}u(\bbx^\star)-\sum_{\sigma\in \partial K}\frac{\omega_i}{\omega_K} u_{\sigma_i}.$$
Since  (take $u\equiv 1$)
$$1=\omega_K+\sum_{\sigma\in \partial K}\omega_i, $$
we see that
$$\frac{1}{\omega_K}-\sum_{\sigma\in \partial K}\frac{\omega_i}{\omega_K} =\frac{1}{\omega_K}\big (1-\sum_{\sigma\in \partial K}\omega_i\big )=\frac{\omega_K}{\omega_K}=1.$$
\subsection{Bound preserving property}
We assume that \eqref{eq:1} is such that if $u_0(\bbx)\in \mathcal{D}$ for all $\bbx\in \R^d$ (or almost everywhere), then $u(\bbx,t)\in \DD$ for all $\bbx\in \R^d,t>0$.

In what follows, we assume
$$\bar u_K=\alpha_K u(\bbx_K)+\sum_{i=1}^N \alpha_i u(\sigma_i)$$
with $\alpha_K, \alpha_i>0$. We note that 
$$\alpha_K+\sum_{i=1}^N \alpha_i=1 ,$$

We write ($\lambda=\tfrac{\Delta t}{\vert K\vert}$)
\begin{equation*}
\begin{split}
\bar u_K^{n+1}&=\bar u_K^n-\lambda\sum_{i=1}^N \bbf(u_{\sigma_i})\cdot \bbn_i\\
&=\alpha_K u(\bbx_K)+\sum_{i=1}^N \alpha_i u(\sigma_i)-\lambda\sum_{i=1}^N \bbf(u_{\sigma_i})\cdot \bbn_i\\
&=\alpha_K\bigg( u(\bbx_K)-\frac{\lambda}{\alpha_K} \sum_{i=1}^N  \hbbf_{\bbn_i}\big (u_{\sigma_i}, u(\bbx_K)\big )\bigg )\\
&\qquad+\sum_{i=1}^N \alpha_i\bigg ( u(\sigma_i)-\frac{\lambda}{\alpha_i} \bigg [ \hbbf_{\bbn_i}(u_{\sigma_i},u_{\sigma_i})\cdot \bbn_i-\hbbf_{\bbn_i}(u_{\sigma_i}, u(\bbx_K))\bigg ]\bigg )
\end{split}
\end{equation*}
where $\hbbf_\bbn(\uparrow,\downarrow)$. Hence, if $$\lambda\min\big (\alpha_K, \min\limits_i\alpha_i\big )\big )\leq \lambda_0$$
where $\lambda_0$ is the maximum stability parameter for $\hbbf_\bbn$, \remi{we get a convex decomposition}. This shows that under this condition, if $\hbbf_\bbn$ is invariant domain preserving, if $\{u^n_\sigma\in \DD\}$ and $\bar u^n_K\in \DD$ then $\bar u_K^{n+1}\in \DD$.

We note that there is no need to define any control volume, this is a purely algebraic property.

\subsection{Numerical evidence}
In order to illustrate this property, we have used the scheme of \cite{BP_Pampa_VEM} on
$$\dpar{u}{t}+\dpar{u}{x}=0$$ on Jiang-Shu problem with periodic boundary conditions, $300$ mesh points, until $T=2$ with $CFL=0.15<\tfrac{1}{6}$. The results are displayed in figure \ref{figure:positive:scal}. In the original scheme of \cite{BP_Pampa_VEM}, the bound preserving strategy is applied both on the point values and the average values. We have tested the results above where we apply the BP strategy only on the point values.
\begin{figure}[]
\begin{center}
    \subfigure[]{\includegraphics[width=0.75\textwidth]{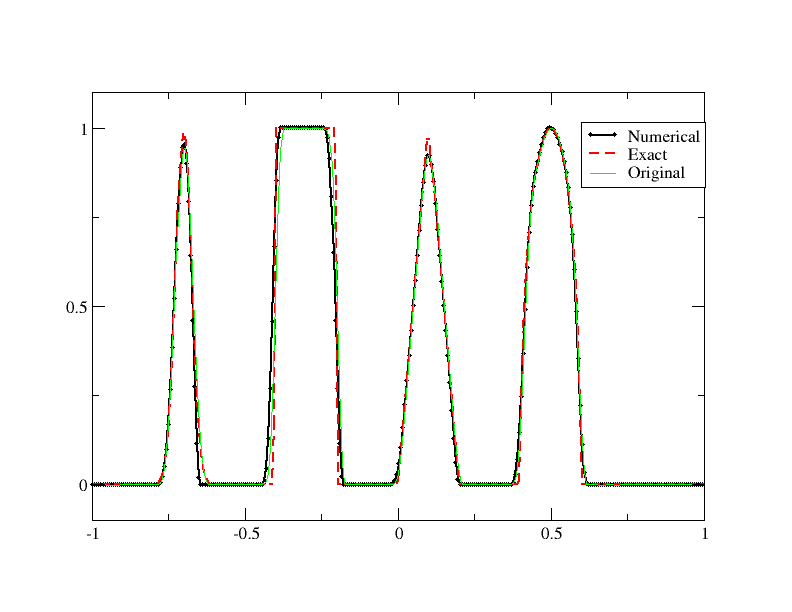}}
     \subfigure[]{\includegraphics[width=0.75\textwidth]{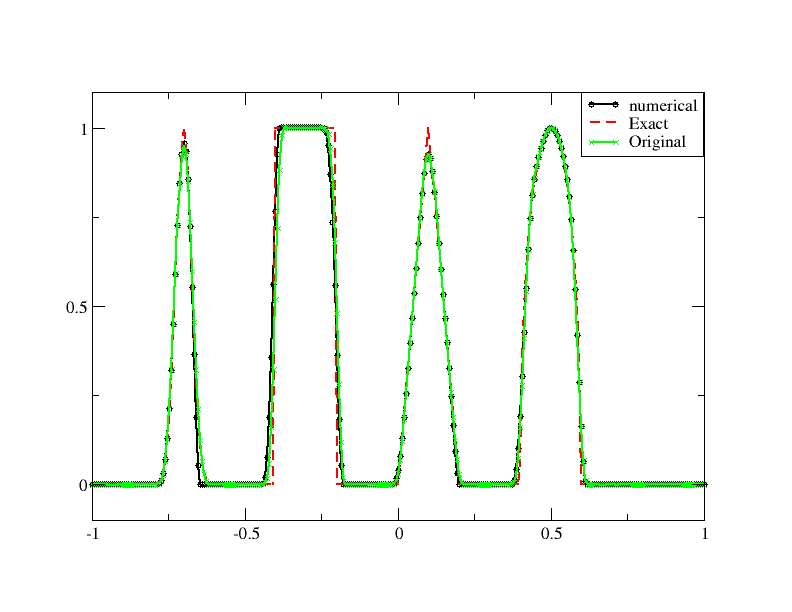}}
     \end{center}
     \caption{\label{figure:positive:scal} Jiang and Shu's problem. (a): average values, (b): point values. Original: scheme of \cite{BP_Pampa_VEM}, Numerical: the BP strategy is applied only on the point values.}
\end{figure}

\remi{We also consider the compressible Euler equations and show the result of the simulation }
Configuration 3 of Lax \& Liu in \cite{zbMATH01929393}, the initial condition is 
$$ (\rho, u,v,p)=\left \{\begin{array}{ll}
(\rho_1,u_1,v_1,p_1)=(1.5, 0, 0, 1.5)& \text{ if } x\geq1\text{ and } y\geq 1,\\
(\rho_2,u_2,v_2,p_2)=(0.5323, 1.206, 0, 0.3) & \text{ if } x\leq 1 \text{ and } y\geq 1,\\
(\rho_3,u_3,v_3,p_3)=(0.138, 1.206, 1.206, 0.029)&\text{ if } x\leq 1\text{ and }y\leq 1,\\
(\rho_4,u_4,v_4,p_4)=(0.5323, 0, 1.206, 0.3) &\text{ if } x\leq1\text{ and } y\leq 1.
\end{array}\right .
$$
Here, the four states are separated by shocks. The domain is $[-2,2]^2$. The solution at $t_f=3$ is displayed in Figure \ref{fig:KT}.
 The mesh is $100\times 100$ cells, i.e. with $120 312$ DoFs. The scheme is that of \cite{BP_Pampa_VEM} where we have applied the bound preserving procedure on the point values only. The CFL is set to $0.1$. The spatial approximation uses the Virtual Finite Element formulation.
 \begin{figure}[hp]
 \begin{center}
 \includegraphics[width=0.8\textwidth]{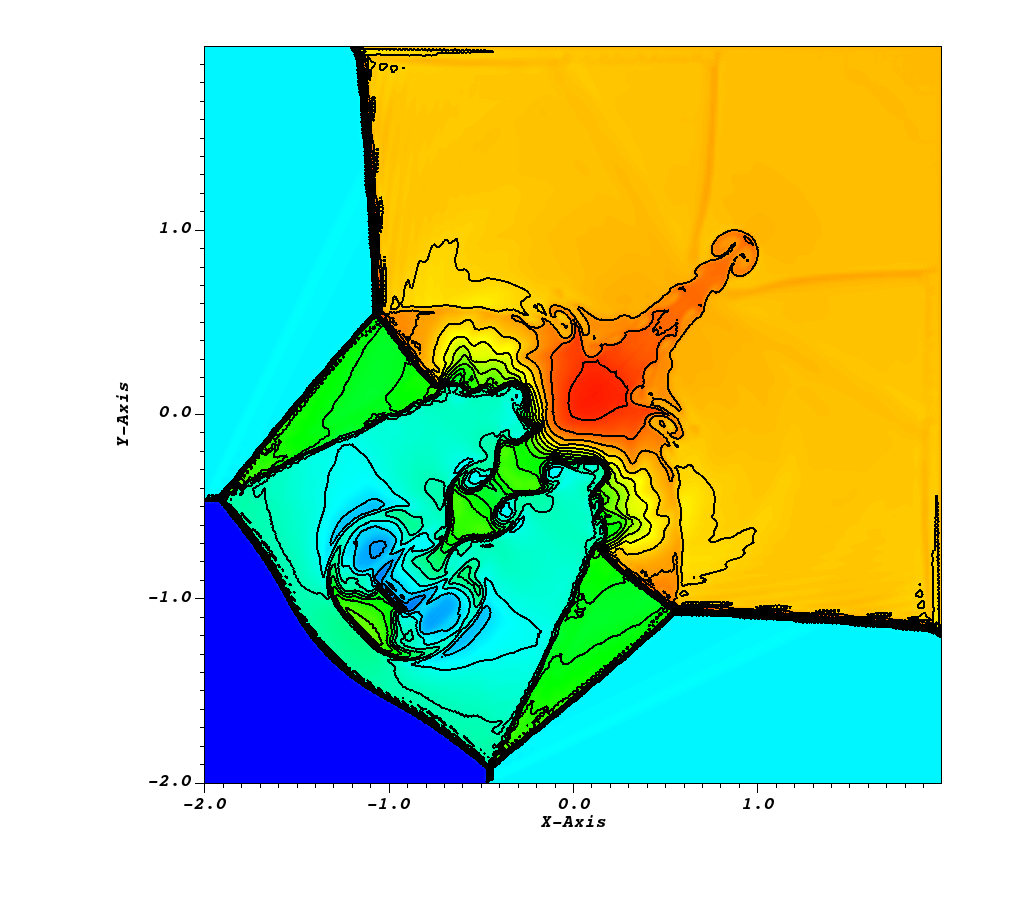}
 \end{center}
 \caption{\label{fig:KT} Solution of Lax-Liu case \# 3.}
 \end{figure}
This is in perfect agreement with proposition \ref{prop}.

\section{Comment on the connection to summation-by-parts}\label{SBP_property}
After our  re-interpretation of the \pampa scheme in terms of \dg, 
we are interested on the underlying structures/properties of the   \pampa scheme even if we have shown that the scheme is energy stable. One key feature of numerical methods is the summation-by-parts (SBP) property. These schemes mimic integration-by-parts discretely and give a general framework to construct energy-stable schemes \cite{Fernandez2014,nordstroem2014,abgrall2020}. Several extensions exists of SBP operators, in addition, in  \cite{Upwind_SBP_operators}, Mattesson introduced upwind SBP operators  which have also been applied inside a \dg framework together with flux vector splitting more recently in \cite{glaubitz2025}. 
The key ingredients of SBP or upwind SBP are the following: 
\begin{itemize}
\item The derivative matrix $D$  which approximates the first derivative of a function and has to be exact up to a certain degree $p$ of the underlying function space, normally polynomial approximation are used however it is not restricted to it, cf. \cite{glaubitz2023FSBP}.
In terms of upwind SBP operators, we have two operators $D_+$ and $D_-$ which have to be exact up to a certain degree $p$ of the polynomial vector space. They are constructed that  $D=(D_++D_-)/2$ holds.
\item In classical SBP and upwind SBP framework, a mass/norm matrix $M$ which mimics the $L^2$ scalar product and often is set be a diagonal matrix where the entries on the diagonal are the quadrature weights. The exactness of the quadrature has to be at least $2p-1$ for the construction process \cite{Fernandez2014SBP} if a polynomial approximation space is considered.
\item The almost skew-symmetric matrix Q:=MD which fulfills than the SBP property meaning 
\begin{equation}\label{eq_SBP}
Q^T+Q=B= \diag{(-1,0,\dots,0, 1)}
\end{equation}
for inflow-outflow boundary conditions.
For periodic boundary conditions, we have instead 
\begin{equation}\label{eq_SBP_2}
Q^T+Q=0.
\end{equation}
In terms of the upwind SBP operators, we have $Q_+$ and $Q_-$ such that $Q_++Q^T_-=0$ and $Q_++Q_+^T = S$, where $S$ is a dissipation matrix. $S$ is symmetric and negative semi-definite. In the upwind case, we have therefore the following conditions: 
\begin{equation}
    \begin{split}
D_+= M^{-1}(Q_{+}  + B/2) & \text{ and } D_-= M^{-1}(Q_- + B/2),\\
MD_+ + D^{T}_-M^T=B & \text{ and } D_+= D_{-} +M^{-1}S.
\end{split}
\end{equation}
\end{itemize}

Coming back to the \pampa \eqref{eq:dG_form2} approach, we can directly define the almost skew-symmetric matrix $Q$ from the SBP community  using \eqref{eq:flux_dG} in each element. It is given by 
$$Q = \begin{pmatrix}
-\frac{1}{2} & 1 & -\frac{1}{2} \\
-1 & 0 & 1 \\
\frac{1}{2} & -1 & \frac{1}{2}
\end{pmatrix}
$$
where $\mathbf{u}= (u_j,\bar{u}_{j+1/2},u_{j+1})^T$.
By direct calculation, \eqref{eq_SBP} is naturally fulfilled with  $B=\operatorname{diag}(-1, 0, 1)$ where we have exactness for constants in the underlying function space due to 
$$D \textbf{1} =(\mathtt{M}^{-1}Q)\textbf{1}= \begin{pmatrix}
-4 & 6 & -2 \\
-1 & 0 & 1 \\
2 & -6 & 4
\end{pmatrix} 
\begin{pmatrix}
1 \\
1\\
1
\end{pmatrix}
=\textbf{0}.
$$
This is essential for local conservation \cite{glaubitz2023FSBP}. The \pampa scheme fulfills essential all the properties which has been proven for SBP schemes or techniques which can be applied inside one element. However, up to this point this is now only the simple \dg formulation with a non-classical approximation space. As it is described in previous chapters, the \pampa scheme is not like this, but use continuous approximations instead of the \dg version. Therefore, we have now to project back from the discontinuous vector space to the continuous one. In terms of  \pampa, we can do this via upwinding or central (or combination of them) at the point values at the element interfaces, cf. Section \ref{se_interpretation}. It means for example when upwinding or a central projection is considered. We have the following cases:
\begin{itemize}
\item \textbf{Upwind}: Drop the update of $u_j$: 
  \begin{itemize}
      \item if $a>0$: 
  coming from the interval $I_\jph$ and keep the internal DoFs computed from \dg. 
  \item If $a<0$, coming from the interval $I_\jmh$ and keep the internal DoFs computed from \dg.
  \end{itemize}
  \item \textbf{Central}: Take the average of the left and right contribution at the point values. 
  \end{itemize}
  Therefore, we have to adapt the point value update in each element concerning the specific framework used where the average value $\xbar{u}_{\jph}$ is not touched. Please remember that our vector inside the element  $I_{\jph}$ for a third order method looks like this $(u_j, \xbar{u}_{\jph}, u_{j+1})^T$ whereas inside $I_{j+\frac{3}{2}}$ we have
  $(u_{j+1}, \xbar{u}_{j+\frac{3}{2}}, u_{j+2})^T$. Focusing on the common element interface at $x_{j+1}$, we have a value for $u_{j+1}^-$ from the left calculated by the DoFs inside $I_\jph$ and alternatively one value from the right $u_{j+1}^{+}$.
  Using \eqref{eq:dG_form2}, we have for the update of $u_{j+1}^{-/+}$ the following 
\begin{equation}\label{eq:dG_form2_2}
  \frac{\mathrm d}{\mathrm dt}\begin{pmatrix}
                                 u_j \\
                               \xbar{u}_\jph \\
                                 \PO{u_{j+1}^-} \\
                                 \PO{u_{j+1}^+}\\
                                \xbar{u}_{j+\frac{3}{2}}\\
                                u_{j+2}\\
                               \end{pmatrix}
  +\frac{a}{\dx}\begin{pmatrix}
      6\xbar{u}_\jph-4u_j-2u_{j+1}^- \\
      u_{j+1}^--u_j \\
      2u_j+4u_{j+1}^--6\xbar{u}_\jph \\
        6\xbar{u}_{j+\frac{3}{2}}-4u_{j+1}^+-2u_{j+1}^+ \\
      u_{j+2}-u_{j+1}^+\\
      2u_{j+1}^++4u_{j+2}-6\xbar{u}_\jph 
    \end{pmatrix}=0.
\end{equation}
This would be the \dg update where we focus on the two neighboring elements $I_\jph$ and $I_{j+\frac{3}{2}}$. In this context, the  update of $u_{j+1}$ would be discontinuous over the element boundaries therefore we have to project back to the continuous approximation space. For simplicity reason later on, we select the central scheme. Here, we take the average to define the continuous representation over this cell interface $u_{j+1}= \frac{u_{j+1}^++u_{j+1}^-}{2}$. This yields to the update for $u_{j+1}$:
\begin{equation*}
  \frac{\mathrm d}{\mathrm dt}
                                 \PO{u_{j+1}}
  +\frac{a}{\dx}
    \frac{1}{2} \left( 2u_j+4u_{j+1}-6\xbar{u}_\jph + 6\xbar{u}_{j+\frac{3}{2}}-4u_{j+1}-2u_{j+2}  \right) =0.
\end{equation*}
resulting in 
\begin{equation*}
  \frac{\mathrm d}{\mathrm dt}
                                 \PO{u_{j+1}}
  +\frac{a}{\dx}
 \left( u_j-3\xbar{u}_\jph + 3\xbar{u}_{j+\frac{3}{2}}-u_{j+2}  \right) =0.
\end{equation*}
 This projection operator works on all interface values and therefore, 
 we have instead of three updates in each cell, in \pampa we have only two  to be considered. The updated derivative matrix has a banded structure and is given by
 $$
 \tilde{D}=\frac{1}{\Delta x} \begin{pmatrix}
\ddots &  &  &  &  &  &  \\
-1 & 0 & 1 &  &  &  &  \\
1 & -3 & 0 & 3 & -1 &  &  \\
\hline
 & & -1 & 0 & 1 &  &   \\
 &  & 1 & -3 & 0 & 3 & -1 \\
 \hline
 & &  &  & -1 & 0 & 1 \\
 &  &  &  &  &  & \ddots 
 \end{pmatrix},
 $$
 where the embedded values represent one update inside an element. Here, we can easily derive a diagonal mass matrix via $\mathbf{\tilde{M}}= \frac{\Delta x}{4} \diag{(\cdots,|3,1|,3,1,3,1, \cdots)}$. Again, the embedded version corresponds to the contribution in one element. Together we obtain 
 $\mathbf{\tilde{M}} \tilde{D}+ (\mathbf{\tilde{M}} \tilde{D})^T=0 $ which corresponds to an periodic SBP operator. Therefore, we can apply all the results of SBP in such context and obtain stability.
 \begin{rmk}
Instead of using the central scheme, also upwind or other combination can be used. This would result in several different version of upwind SBP operators. Thanks to our proposition \eqref{pro_energy_stability}, energy stability can be proven in such context. However, the interesting question rises how the projection step from the discontinuous approximation space back to the continuous one effects the SBP properties. This is not only interesting in one-space dimension but in particular relevant in multi-dimension. Some works  already exist concerning projection and extension of stability properties inside the SBP framework \cite{OlssonSBP, OlssonSBP2}. Future work will be therefore considering the effect of the projection step on the SBP properties and the results emerged from this.
 \end{rmk}
\begin{rmk}
During the ICOSAHOM 2025 conference in Montr\'eal, a presentation was given on an ongoing work titled "Stability of the Active Flux Method in the Framework of Summation-by-Parts Operators" \cite{barsukow2025stabilityactivefluxmethod}. While we have taken note of this contribution, we would like to clarify that our work was developed independently. There has been no exchange of ideas with this group, which began prior to their presentation.
Our investigation originates from a visit of one of the authors to SUSTech in early June 2025, during which valuable discussions were held with Professor Kailiang Wu, whose input we gratefully acknowledged.
While both efforts concern the active flux method, the focus and context differ: our study is situated more within the finite element framework, in contrast to the finite difference setting emphasized in \cite{barsukow2025stabilityactivefluxmethod}. In particular, their work offers a contribution through a detailed analysis of mass matrix evaluation in the upwind active flux method. In contrast, our interest lies more in understanding the role and properties of the projection step from discontinuous to continuous approximation spaces, which we intend to explore further in future work. In our study, we did not pursue a in-depth stability analysis of the upwind SBP formulation, as Proposition 3.1 already ensures the relevant stability properties, rendering a detailed investigation unnecessary.
\end{rmk}

{
\section{Conclusions, perspectives.}
In this paper, we have shown a connection between the \pampa scheme and the \dg formulation. One can see \pampa, for linear advection problem, as one step of \dg followed by one projection onto the initial approximation step. There are several possible ways to project, we have shown two, and mainly more are certainly possible: what is the "best" one, in which sense?  This reformulation of \pampa leads to a generalisation to non linear problems. This generalisation is not equivalent to the formulation contained in \cite{Abgrall2023b} and \cite{BP_Pampa_VEM} for non linear problem. In \cite{arXiv:2511.16180} this reformulation of \pampa has been used to \remi{formulate} in a much better way the discretisation of boundary conditions. We have also \remi{shown }some intrinsic bound preserving properties of \pampa, linked to a generalisation of the Simpson formula. They have already been used in other publications to construct provable bound preserving schemes, and we also note that other solutions to the same problem are also available in the literature. Last, in one dimension, we have shown a SBP property of \pampa. One can imagine that a generalisation of this to multiD would provide an answer to the first question above: what is the "best" projection.
}
  \section*{Acknowledgments.}
  We thank the many discussions we have had with Professor Kailiang Wu in SUSTech, China, in early June 2025.  Y.L. was supported by UZH Postdoc Grant, 2024 / Verf\"{u}gung Nr. FK-24-110 and SNFS grant 200020$\_$204917 ``Solving advection dominated problems with high order schemes with polygonal meshes: application to compressible and incompressible flow problems''. P. \"O. is supported by the DFG within SPP 2410, project
525866748 and under the personal grant 520756621.
\bibliographystyle{unsrt}
\bibliography{main}

@misc{arXiv:2511.16180,
    author = {Abgrall, R{\'e}mi and Liu, Yongle},
    title = {Robust {PAMPA} {Scheme} in the {DG} {Formulation} on {Unstructured} {Triangular} {Meshes}: bound preservation, oscillation elimination, and boundary conditions},
    year = {2025},
    howpublished = {Preprint, {arXiv}:2511.16180 [math.{NA}] (2025)},
    keywords = {76M10,76M12,65M08,65M22,35L40},
    url = {https://arxiv.org/abs/2511.16180},
    arXiv = {arXiv:2511.16180}
}

@article{Pampa1D,
    author = {Abgrall, R{\'e}mi and Jiao, Miaosen and Liu, Yongle and Wu, Kailiang},
    title = {Bound-preserving point-average-moment polynomial-interpreted ({PAMPA}) scheme: one-dimensional case},
    fjournal = {Communications in Computational Physics},
    journal = {Commun. Comput. Phys.},
    issn = {1815-2406},
    volume = {39},
    number = {1},
    pages = {29--58},
    year = {2026},
note={Preprint, {arXiv}:2412.03423 [math.{NA}] (2024)}
}

@misc{connard,
      title={Semi-discrete Active Flux as a Petrov-Galerkin method}, 
      author={Wasilij Barsukow},
      year={2025},
      eprint={2508.15017},
      archivePrefix={arXiv},
      primaryClass={math.NA},
      url={https://arxiv.org/abs/2508.15017}, 
}

@Article{Calhoun2023-ms,
  author  = {D. Calhoun and E. Chudzik and C. Helzel},
  journal = {J. Sci. Comput.},
  title   = {The {C}artesian grid {A}ctive {F}lux method with adaptive mesh refinement},
  year    = {2023},
  pages   = {54},
  volume  = {94},
}

@Article{zbMATH01929393,
 Author = {Liu, Xu-Dong and Lax, Peter D.},
 Title = {Solution of two-dimensional Riemann problems of gas dynamics by positive
schemes},
 FJournal = {SIAM Journal on Scientific Computing},
 Journal = {SIAM J. Sci. Comput.},
 ISSN = {},
 Volume = {19},
 Number = {},
 Pages = {319--340},
 Year = {1998},
 Language = {English},
 DOI = {},
 Keywords = {},
 zbMATH = {},
 Zbl = {}
}

@Phdthesis{Maeng,
author={Maeng, Jungyeoul},
title={
On the Advective Component of Active Flux Schemes for Nonlinear Hyperbolic Conservation Laws
},
    school={{Applied and Interdisciplinary Mathematics, University of Michigan}},
    year={2017},
Note={{https://deepblue.lib.umich.edu/handle/2027.42/138695}}
}

@PhdThesis{He,
  author = 	 {Fanchen He},
  title = 	 {Towards a New-generation Numerical Scheme for the Com-
pressible Navier-Stokes Equations with the Active Flux Method},
  school = 	 {{Applied and Interdisciplinary Mathematics, University of Michigan}},
  year = 	 2021,
note={https://deepblue.lib.umich.edu/handle/2027.42/169687}
}

@Unpublished{AF1,
  author = 	 {T.A. Eyman and P.L. Roe},
  title = 	 {Active flux},
  note = 	 {49th AIAA Aerospace Science Meeting},
  year = 	 2011}

@Unpublished{AF2,
  author = 	 {T.A. Eyman and P.L. Roe},
  title = 	 {Active flux for systems},
  note = 	 {20 th AIAA Computationa Fluid Dynamics Conference},
  year = 	 2011}

@PhdThesis{AF3,
  author = 	 {T.A. Eyman},
  title = 	 {Active flux},
  school = 	 {University of Michigan},
  year = 	 2013}

@Article{BP_Pampa_VEM,
  author =	 {R. Abgrall and W. Boscheri and Y. Liu},
  title =	 {Virtual finite element and hyperbolic problems: The
                  {PAMPA} algorithm},
  journal =	 {J. Comput. Phys},
  year = 	 {2026},
  volume = 	 {546},
  pages = 	 {114521},
  note = 	 {arXiv 2502.10069}
}

@article {AF5,
    AUTHOR = {W. Barsukow},
     TITLE = {The active flux scheme for nonlinear problems},
   JOURNAL = {J. Sci. Comput.},
  FJOURNAL = {Journal of Scientific Computing},
    VOLUME = {86},
      YEAR = {2021},
    NUMBER = {1},
     PAGES = {Paper No. 3, 34},
      ISSN = {0885-7474},
   MRCLASS = {65M08 (35L45 35L65 65M25)},
  MRNUMBER = {4192426},
       DOI = {10.1007/s10915-020-01381-z},
       URL = {https://doi.org/10.1007/s10915-020-01381-z},
}

@InProceedings{EymannRoe2013,
  author    = {T. A. Eymann and P. L. Roe},
  booktitle = {21st AIAA Computational Fluid Dynamics Conference},
  title     = {Multidimensional {A}ctive {F}lux schemes},
  year      = {2013},
  editor    = {American Institute of Aeronautics and Astronautics},
}

@misc{barsukow2025generalizedactivefluxmethod,
      title={A generalized Active Flux method of arbitrarily high order in two dimensions}, 
      author={Wasilij Barsukow and Praveen Chandrashekar and Christian Klingenberg and Lisa Lechner},
      year={2025},
      eprint={2502.05101},
      archivePrefix={arXiv},
      primaryClass={math.NA},
      url={https://arxiv.org/abs/2502.05101}, 
}

@Article{Barsukow,
    Author = {Barsukow, Wasilij and Hohm, Jonathan and Klingenberg, Christian and Roe, Philip L.},
    Title = {The active flux scheme on {Cartesian} grids and its low {Mach} number limit},
    FJournal = {Journal of Scientific Computing},
    Journal = {J. Sci. Comput.},
    ISSN = {0885-7474},
    Volume = {81},
    Number = {1},
    Pages = {594--622},
    Year = {2019},
    Language = {English},
    DOI = {10.1007/s10915-019-01031-z},
    Keywords = {65M08,65M06,35L05,65M12,35L65,35L45,76Q05},
    zbMATH = {7129370},
    Zbl = {1442.65190}
}

@Article{AF4,
  author = 	 {C. Helzel and D. Kerkmann and L. Scandurra},
  title = 	 {A new {ADER} method inspired by the active flux method},
  journal = 	 {Journal of Scientific Computing},
  year = 	 2019,
  volume = 	 80,
  number = 	 3,
  pages = 	 {35-61}}

@Article{Abgrall2024_WBAF,
  author  = {Abgrall, R. and Liu, Y.},
  journal = {SIAM J. Sci. Comput.},
  title   = {A new approach for designing well-balanced schemes for the shallow water equations: a combination of conservative and primitive formulations},
  year    = {2024},
  pages   = {A3375--A3400},
  volume  = {46},
}

@Article{Abgrall_AF,
    Author = {Abgrall, R.},
    Title = {A combination of residual distribution and the active flux formulations or a new class of schemes that can combine several writings of the same hyperbolic problem: application to the 1D {Euler} equations},
    FJournal = {Communications on Applied Mathematics and Computation},
    Journal = {Commun. Appl. Math. Comput.},
    ISSN = {2096-6385},
    Volume = {5},
    Number = {1},
    Pages = {370--402},
    Year = {2023},
    Language = {English},
    DOI = {10.1007/s42967-021-00175-w},
    Keywords = {65M06,65M08,65M99},
    zbMATH = {7666152}
}

@Article{BarsukowAbgrall,
  author   = {Abgrall, R. and Barsukow, W.},
  journal  = {ESAIM, Math. Model. Numer. Anal.},
  title    = {Extensions of active flux to arbitrary order of accuracy},
  year     = {2023},
  issn     = {0764-583X},
  number   = {2},
  pages    = {991--1027},
  volume   = {57},
  doi      = {10.1051/m2an/2023004},
  fjournal = {European Series in Applied and Industrial Mathematics (ESAIM): Mathematical Modelling and Numerical Analysis},
  keywords = {65M06,65M08,65M60,76N99},
  language = {English},
  zbmath   = {7689074},
}

@misc{PampaDG,
    author = {Abgrall, R{\'e}mi and Liu, Yongle},
    title = {Robust {PAMPA} {Scheme} in the {DG} {Formulation} on {Unstructured} {Triangular} {Meshes}: bound preservation, oscillation elimination, and boundary conditions},
    year = {2025},
    howpublished = {Preprint, {arXiv}:2511.16180 [math.{NA}] (2025)},
    keywords = {76M10,76M12,65M08,65M22,35L40},
    url = {https://arxiv.org/abs/2511.16180},
    arXiv = {arXiv:2511.16180}
}

@Article{Abgrall2023b,
  author  = {Abgrall, R{\'e}mi and Lin, Jianfang and Liu, Yongle},
  journal = {Beijing J. of Pure and Appl. Math.},
  title   = {Active flux for triangular meshes for compressible flows problems},
  year    = {2025},
  pages   = {1--33},
  volume  = {2},
}

@article{brezzi,
    author = {Beir{\~a}o da Veiga, Louren{\c{c}}o and Brezzi, Franco and Marini, L. Donatella and Russo, Alessandro},
    title = {The virtual element method},
    journal = {Acta Numerica},
    issn = {0962-4929},
    volume = {32},
    pages = {123--202},
    year = {2023},
}

@Article{vem,
  author = 	 {B. Ahmed and A. Alsaedi and F. Brezzi and L.D. Marini and A. Russo},
  title = 	 {Projectors for {Virtual Element Methods}},
  journal = 	 {Comput. Math. Appl.},
  year = 	 2013,
  volume = 	 66,
  number = 	 3}

@Article{hitch,
 Author = {Beir{\~a}o da Veiga, L. and Brezzi, Franco and Marini, L. D. and Russo, A.},
 Title = {The {Hitchhiker}'s guide to the virtual element method},
 FJournal = {M\(^3\)AS. Mathematical Models \& Methods in Applied Sciences},
 Journal = {Math. Models Methods Appl. Sci.},
 ISSN = {0218-2025},
 Volume = {24},
 Number = {8},
 Pages = {1541--1573},
 Year = {2014},
 Language = {English},
 DOI = {10.1142/S021820251440003X},
 Keywords = {Misc,65N30,65N12,65-01},
 zbMATH = {6316034},
 Zbl = {1291.65336}
}

@article{Fernandez2014,
 author = {Del Rey Fern{\'a}ndez, David C. and Hicken, Jason E. and Zingg, David W.},
 title = {Review of summation-by-parts operators with simultaneous approximation terms for the numerical solution of partial differential equations},
 fjournal = {Computers and Fluids},
 journal = {Comput. Fluids},
 issn = {0045-7930},
 volume = {95},
 pages = {171--196},
 year = {2014},
 language = {English},
 doi = {10.1016/j.compfluid.2014.02.016},
 keywords = {65M06},
 zbMATH = {6891165},
 Zbl = {1390.65064}
}

@article{nordstroem2014,
 author = {Sv{\"a}rd, Magnus and Nordstr{\"o}m, Jan},
 title = {Review of summation-by-parts schemes for initial-boundary-value problems},
 fjournal = {Journal of Computational Physics},
 journal = {J. Comput. Phys.},
 issn = {0021-9991},
 volume = {268},
 pages = {17--38},
 year = {2014},
 language = {English},
 doi = {10.1016/j.jcp.2014.02.031},
 keywords = {65M06,65M12},
 zbMATH = {6660200},
 Zbl = {1349.65336}
}

@article{abgrall2020,
 author = {Abgrall, R. and Nordstr{\"o}m, J. and {\"O}ffner, P. and Tokareva, S.},
 title = {Analysis of the {SBP}-{SAT} stabilization for finite element methods. {I}: {Linear} problems},
 fjournal = {Journal of Scientific Computing},
 journal = {J. Sci. Comput.},
 issn = {0885-7474},
 volume = {85},
 number = {2},
 pages = {28},
 note = {Id/No 43},
 year = {2020},
 language = {English},
 doi = {10.1007/s10915-020-01349-z},
 keywords = {65M60,65M12},
 zbMATH = {7299268},
 Zbl = {1456.65100}
}

@article{glaubitz2023FSBP,
 author = {Glaubitz, Jan and Nordstr{\"o}m, Jan and {\"O}ffner, Philipp},
 title = {Summation-by-parts operators for general function spaces},
 fjournal = {SIAM Journal on Numerical Analysis},
 journal = {SIAM J. Numer. Anal.},
 issn = {0036-1429},
 volume = {61},
 number = {2},
 pages = {733--754},
 year = {2023},
 language = {English},
 doi = {10.1137/22M1470141},
 keywords = {65M60,65M70,65M06,65L06,65N30,65N35,65K10,65D25,65D12,65T40,65M12},
 zbMATH = {7679126},
 Zbl = {1529.65066}
}

@article{Fernandez2014SBP,
 author = {Del Rey Fern{\'a}ndez, David C. and Boom, Pieter D. and Zingg, David W.},
 title = {A generalized framework for nodal first derivative summation-by-parts operators},
 fjournal = {Journal of Computational Physics},
 journal = {J. Comput. Phys.},
 issn = {0021-9991},
 volume = {266},
 pages = {214--239},
 year = {2014},
 language = {English},
 doi = {10.1016/j.jcp.2014.01.038},
 keywords = {65B10},
 zbMATH = {6329213},
 Zbl = {1311.65002}
}

@misc{barsukow2025stabilityactivefluxmethod,
      title={Stability of the Active Flux Method in the Framework of Summation-by-Parts Operators}, 
      author={Wasilij Barsukow and Christian Klingenberg and Lisa Lechner and Jan Nordström and Sigrun Ortleb and Hendrik Ranocha},
      year={2025},
      eprint={2507.11068},
      archivePrefix={arXiv},
      primaryClass={math.NA},
      url={https://arxiv.org/abs/2507.11068}, 
}

@article{Upwind_SBP_operators,
 author = {Mattsson, Ken},
 title = {Diagonal-norm upwind {SBP} operators},
 fjournal = {Journal of Computational Physics},
 journal = {J. Comput. Phys.},
 issn = {0021-9991},
 volume = {335},
 pages = {283--310},
 year = {2017},
 language = {English},
 doi = {10.1016/j.jcp.2017.01.042},
 keywords = {65M06,35L40},
 zbMATH = {6818396},
 Zbl = {1380.65166}
}

@article{glaubitz2025,
 author = {Glaubitz, Jan and Ranocha, Hendrik and Winters, Andrew R. and Schlottke-Lakemper, Michael and {\"O}ffner, Philipp and Gassner, Gregor},
 title = {Generalized upwind summation-by-parts operators and their application to nodal discontinuous {Galerkin} methods},
 fjournal = {Journal of Computational Physics},
 journal = {J. Comput. Phys.},
 issn = {0021-9991},
 volume = {529},
 pages = {26},
 note = {Id/No 113841},
 year = {2025},
 language = {English},
 doi = {10.1016/j.jcp.2025.113841},
 keywords = {65M12,65M60,65M70,65D25},
 zbMATH = {8033685}
}

@article{OlssonSBP,
 author = {Olsson, Pelle},
 title = {Summation by parts, projections, and stability. {I}},
 fjournal = {Mathematics of Computation},
 journal = {Math. Comput.},
 issn = {0025-5718},
 volume = {64},
 number = {211},
 pages = {1035--1065, s23--s26},
 year = {1995},
 language = {English},
 doi = {10.2307/2153482},
 keywords = {65M06,65M12,35M10,35Q30},
 zbMATH = {789294},
 Zbl = {0828.65111}
}

@article{OlssonSBP2,
 author = {Olsson, Pelle},
 title = {Summation by parts, projections, and stability. {II}},
 fjournal = {Mathematics of Computation},
 journal = {Math. Comput.},
 issn = {0025-5718},
 volume = {64},
 number = {212},
 pages = {1473--1493},
 year = {1995},
 language = {English},
 doi = {10.2307/2153366},
 keywords = {65M06,65M12,35M10},
 zbMATH = {878742},
 Zbl = {0848.65064}
}
\appendix

\section{VEM approximation: basic facts.}\label{appendix:VEM}
 
Any basis of $V_k(P)$ is virtual, meaning that it is not explicitly computed in closed form. Consequently, the evaluation of $v_h(\bbx)$ at some $\bbx\in P$ it is not straightforward. One way to proceed would be to evaluate $\pi(v_h)$, that is the $L^2$ projection of $v_h$ on $\P^k(P)$. Since we want to do it only using the degrees of freedom, it turns out that this is impossible in practice. But, as shown in \cite{hitch,vem}, it is possible to define a space $W_k(P)$ for which computing the $L^2$ projection is feasible.
It is constructed from $V_k(P)$ in two steps. First, we consider the approximation space $\widetilde{V}_k(P)$ given by
$$\widetilde{V}_k(P)=\{ v_h, v_h \text{ is continuous on }\partial P \text{ and } (v_h)_{\partial P}\in \P^k(\partial P); \text{ and }\Delta v_h\in \P^{k-2}(P)\}.$$
For $p\in \N$, let $M^\star_p(P)$ be the vector space generated by the scaled monomial of degree $p$ exactly,
$$m(\bbx)\in M^\star_p(P), \qquad m(\bbx)=\sum_{\bsa, |\bsa|=p} \beta_{\bsa} m_{\bsa}(\bbx).$$
Then, we consider $W_k(P)$, which is the subspace of $\widetilde{V}_k(P)$ defined by 
$$W_k(P)=\{ w_h\in \widetilde{V}_k(P), \langle w_h-\pi^\nabla w_h,q\rangle=0, \quad \forall q\in M^\star_{k-2}(P)\cup M^\star_k(P)\}.$$
In \cite{vem}, it is shown that $\dim V_k(P)=\dim W_k(P)$.

This approximation space is defined as follows. $w_h\in W_k(P)$ if and only if
 \begin{enumerate}
  \item $w_h$ is a polynomial of degree $k$ on each edge $e$ of $P$, that is $(w_h)_{|e}\in \P^k(e)$,
 \item $w_h$ is continuous on $\partial P$,
\item $\Delta w_h\in \P^{k-2}(P)$,
\item $\int_{P} w_h m_{\bsa} \; {\rm d}\bbx=\int_{P} \pi^\nabla w_h m_{\bsa} \; {\rm d}\bbx$ for $|\bsa|=k-1, k$.
\end{enumerate}
The degrees of freedom are the same as in $V_k(P)$:
\begin{enumerate}
 \item The value of $w_h$ on the vertices of $P$,
 \item On each edge of $P$, the value of $v_k$ at the $k-1$ internal points of the $k+1$ Gauss--Lobatto points on this edge,
 \item The moments up to order $k-2$ of $w_h$ in $P$,
 $$m_{\bsa}(w_h):=\dfrac{1}{\vert P\vert }\int_{P} w_h m_{\bsa}\; d\bbx, \qquad |\bsa|\leq k-2.$$
 \end{enumerate}

The $L^2$ projection of $w_h$ is computable. For any $\bsb$, if $\pi^0(w_h)=\sum_{\bsa, |\bsa|\leq k} s_{\bsa} m_{\bsa}$, we have
$$\langle \pi^0(w_h), m_{\bsb}\rangle=\sum_{\bsa, |\bsa|\leq k} s_{\bsa} \langle m_{\bsb}, m_{\bsa}\rangle=\langle w_h, m_{\bsb}\rangle.$$
The left hand side is computable since the inner product $\langle m_{\bsb}, m_{\bsa}\rangle$ only involves monomials. We need to look at the right hand side. If $|\bsa|\leq k-2$, $\langle w_h, m_{\bsb}\rangle=|P| m_{\bsb}(w_h)$ and if $|\bsa|=k-1$ or $k$, we have
$$\langle w_h, m_{\bsb}\rangle=\langle \pi^\nabla w_h, m_{\bsa}\rangle,$$ which is computable from the degrees of freedom only.

We note that if $w_h\in W_k(P)$, then $m_{\bsa}(\pi^0 w_h)=m_{\bsa}(w_h)$. Indeed
$$m_{\bsa}(\pi^0 w_h)=\frac{1}{|P|} \langle \pi^0 w_h ,m_{\bsa} \rangle =\frac{1}{|P|} \langle  w_h ,m_{\bsa} \rangle$$ by construction. This is not true for $\pi^\nabla$ in $V_k(P)$.
However, for $k\leq 2$, $V_k(P)=W_k(P)$

The last remark is that, since $\P^k(P)\subset V_k(P)$ and $\P^k(P)\subset W_k(P)$, the projections of $C^{k+1}(P)$ onto $V_k(P)$ and $W_k(P)$ defined by the degrees of freedom is $k+1$-th order accurate.

\bigskip

We recall how to approximate the mass matrix.
We have for any $i$ and $j$ that represent the index of any degree of freedom,
$$\int_P\varphi_i\varphi_j=\big (\pi \varphi_i,\pi\varphi_j\big )+
\big (\Id-\pi)\varphi_i, (\Id-\pi)\varphi_j)$$ because for any $u\in L^2(K)$, we have,v
$$\big ( (\Id-\pi)u,\pi v\big )=0$$
The term
$$\big (\pi \varphi_i,\pi\varphi_j\big )$$ can be computed exactly, while
we approximated
$$\big ( \big (\Id-\pi)\varphi_i, (\Id-\pi)\varphi_j)\approx \vert P\vert \sum_{r=1}^{N_{dofs}} DOF_r\big ( (\Id-\pi)\varphi_i\big ) 
DOF_r\big ( (\Id-\pi)\varphi_j\big )$$
The sum of the two matrices is invertible.

\end{document}